\DeclareMathOperator*{\argmin}{arg\,min}
\DeclareMathOperator*{\dom}{dom}
\def\A{{\mathcal A}}
\def\B{{\mathcal B}}
\def\D{{\mathcal D}}
\def\F{{\mathcal F}}
\def\G{{\mathcal G}}
\def\H{{\mathcal H}}
\def\I{{\mathcal I}}
\def\L{{\mathcal L}}
\def\Q{{\mathcal Q}}
\def\R{{\mathcal R}}
\def\S{{\mathcal S}}
\def\T{{\mathcal T}}
\newtheorem{lemma}{Lemma}[section]
\newtheorem{theorem}{Theorem}[section]
\newtheorem{assumption}{Assumption}[section]
\title{Convergence Analysis of Generalized ADMM with Majorization for Linearly Constrained Composite Convex Optimization}
\author{Hongwu Li\thanks{Schoole of Science, Beijing University of Technology, Beijing 100124, P.R. China (Email: lihongwu2018@163.com).}
\thanks{School of Mathematics and Statistics, Nanyang Normal University, Nanyang 473061, P.R. China.}
\and
Haibin Zhang\thanks{School of Science, Beijing University of Technology, Beijing 100124, P.R. China (Email: zhanghaibin@bjut.edu.cn).}
 \and
Yunhai Xiao\thanks{Center for Applied Mathematics of Henan Province,
	Henan University, Kaifeng 475000, P.R. China (Email: yhxiao@henu.edu.cn).} \href{https://orcid.org/0000-0002-7503-4585}{\includegraphics[scale=0.08]{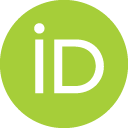}}
}
\begin{document}
\maketitle

\begin{abstract}
	The generalized alternating direction method of multipliers (ADMM) of Xiao et al. [{\tt Math. Prog. Comput., 2018}] aims at
	the two-block linearly constrained composite convex programming problem, in which each block is in the form of ``nonsmooth + quadratic".
	However, in the case of non-quadratic (but smooth), this method may fail unless the favorable structure of 'nonsmooth + smooth' is no longer used.
	This paper aims to remedy this defect by using a majorized technique to approximate the augmented Lagrangian function, so that the corresponding subprobllem can be decomposed into some smaller problems and then solved separately.
    Furthermore, the recent symmetric Gauss-Seidel (sGS) decomposition theorem guarantees the equivalence between the bigger subproblem and these smaller ones.
	This paper focuses on convergence analysis, that is,  we prove that the sequence generated by the proposed method converges globally to a Karush-Kuhn-Tucker point of the considered problem.
	Finally, we do some numerical experiments on a kind of simulated convex composite optimization problems which illustrate that the proposed method is more efficient than its compared ones.
\end{abstract}

{\bf Key words.} composite convex programming, alternating direction method of multipliers, majorization, symmetric Gauss-Seidel iteration, proximal point term

\setcounter{equation}{0}
\section{Introduction}\label{section1}
Let $\mathbb{X}, \mathbb{Y}$, and $\mathbb{Z}$ be real finite dimensional Euclidean spaces with an inner product $\langle \cdot,\cdot \rangle $ and its induced norm $\|\cdot\|$.
Let  $f_1 : \mathbb{X} \rightarrow ( - \infty, + \infty)$ and $h_1 :  \mathbb{Y} \rightarrow ( - \infty, + \infty)$ are convex functions with Lipschitz continuous gradients; $f_2 : \mathbb{X} \rightarrow ( - \infty, + \infty]$ and $h_2 :  \mathbb{Y} \rightarrow ( - \infty, + \infty]$ are closed proper convex (not necessarily smooth) functions.
We consider the following composite convex optimization problem
\begin{equation}\label{prob1}
\min_{x\in\mathbb{X},y\in\mathbb{Y}} \ \big\{ f_1(x)+f_2(x) + h_1(y)+h_2(y) \ | \ \A^*x + \B^*y = c \big\},
\end{equation}
where  $\A : \mathbb{Z} \rightarrow \mathbb{X}$ and $\B : \mathbb{Z} \rightarrow \mathbb{Y}$ are linear operators with adjoints $\A^*$ and $\B^*$, respectively; and $c\in\mathbb{Z}$ is a given vector.

Let $\sigma\in( 0 , +\infty)$ be a penalty parameter, the augmented Lagrangian function associated with problem (\ref{prob1}) is defined by, for any $(x,y,z)\in\mathbb{X}\times\mathbb{Y}\times\mathbb{Z}$,
\begin{equation}\label{alf}
\L_{\sigma}(x,y;z) := f_1(x)+f_2(x) + h_1(y)+h_2(y) + \langle z, \A^*x + \B^*y - c\rangle +\frac{\sigma}{2}\|\A^*x + \B^*y - c\|^2,
\end{equation}
where $(x,y,z)\in\mathbb{X}\times\mathbb{Y}\times\mathbb{Z}$ and $z$ is a multiplier.
One attempt to solve (\ref{prob1}) is the standard alternating direction method of multipliers (ADMM), which alternately updates the variables $(x,y)$ and the multiplier $z$ from an initial point $(x^0,y^0,z^0)\in \dom(f)\times \dom(h)\times\mathbb{Z}$, that is,
\begin{equation}\label{cadmm}
\left\{
\begin{array}{l}
x^{k+1} = \argmin_{x\in\mathbb{X}}\big\{\L_{\sigma}(x,y^k;z^k)=:f_1(x)+f_2(x)+\frac{\sigma}{2}\|\A^*x + \B^*y^k - c+z^k/\sigma\|^2\big\},\\[3mm]
y^{k+1} =  \argmin_{y\in\mathbb{Y}}\big\{\L_{\sigma}(x^{k+1},y;z^k)=:h_1(y)+h_2(y)+\frac{\sigma}{2}\|\A^*x^{k+1} + \B^*y - c+z^k/\sigma\|^2\big\}, \\[3mm]
z^{k+1} = z^k + \tau\sigma(\A^*x^{k+1} + \B^*y^{k+1} - c),
\end{array}
\right.
\end{equation}
where $\tau\in(0, (1+\sqrt{5})/2)$ is a step-length. The convergence of the standard ADMM has long been established by Gabay \& Mercier\cite{GMC}, and Fortin \& Glowinski\cite{FGC}. And especially, Gabay \cite{ADMMDRS} showed that the standard ADMM with $\tau=1$ is exactly the Douglas-Rachford splitting method to the sum of two maximal monotone operators from the dual of (\ref{prob1}). And then, Eckstein \& Bertsekas \cite{GADMM} showed that itself is an instance of the proximal point algorithm \cite{ALMPPA} to a specially generated operator. For more work of ADMM, one refer to  an important note \cite{noteADMM} and excellent survey \cite{GHC}.

To improve the performance of (\ref{cadmm}) in the case of $\tau = 1$, Eckstein \& Bertsekas \cite{GADMM} also proposed a generalized variant of ADMM. Subsequently, Chen \cite[Section 3.2]{CHEND} made an interesting observation and concluded that the generalized ADMM of Eckstein \& Bertsekas \cite{GADMM} is equivalent to the following iterative scheme from an initial point $\widetilde{\omega}^0:=(\widetilde{x}^0, \widetilde{y}^0, \widetilde{z}^0)\in \dom(f)\times \dom(h)\times\mathbb{Z}$,
\begin{equation}\label{gadmm}
\left\{
\begin{array}{l}
x^{k} = \argmin_{x\in\mathbb{X}}\Big\{\L_{\sigma}(x,\widetilde{y}^k;\widetilde{z}^k)=:f_1(x)+f_2(x)+\frac{\sigma}{2}\|\A^*x + \B^*\widetilde{y}^k - c+\widetilde{z}^k/\sigma\|^2\Big\},\\[3mm]
z^{k} = \widetilde{z}^k + \sigma(\A^*x^{k} + \B^*\widetilde{y}^{k} - c),\\[3mm]
y^{k} =  \argmin_{y\in\mathbb{Y}}\Big\{\L_{\sigma}(x^{k},y;z^k)=:h_1(y)+h_2(y)+\frac{\sigma}{2}\|\A^*x^k + \B^*y - c+z^k/\sigma\|^2\Big\}, \\[3mm]
\widetilde{\omega}^{k+1} = \widetilde{\omega}^k + \rho(\omega^k - \widetilde{\omega}^k),
\end{array}
\right.
\end{equation}
where $\omega^k = (x^k, y^k, z^k)$, $\widetilde{\omega}^k = (\widetilde{x}^k, \widetilde{y}^k, \widetilde{z}^k)$, and $\rho\in(0,2)$ is an uniform relaxation factor. Clearly, for $\rho=1$, the above generalized ADMM scheme is scheme  (\ref{cadmm})  with $\tau=1$.
We see that the efficiency of the scheme (\ref{gadmm}) is mainly determined by the $x$- and $y$-subproblems. It was known that, if $f_1(x)$ and $h_1(y)$ are quadratic, and $f_2(x)$ and $g_2(y)$ are in the form of $f_2(x)=f_2(x_1)$ and $h_2(y)=h_2(y_1)$, the  $x$- and $y$-subproblems can be solved efficiently if a couple of suitable proximal point terms are added, that is
\begin{align}
		x^{k} = \argmin_{x\in\mathbb{X}}\Big\{f_1(x)+f_2(x_1)+\frac{\sigma}{2}\|\A^*x + \B^*\widetilde{y}^k - c+\widetilde{z}^k/\sigma\|^2+\frac12\|x-\widetilde{x}^k\|_\S^2\Big\},\label{xiaox}\\[2mm]
		y^{k} =  \argmin_{y\in\mathbb{Y}}\Big\{h_1(y)+h_2(y_1)+\frac{\sigma}{2}\|\A^*x^k + \B^*y - c+z^k/\sigma\|^2+\frac12\|y-\widetilde{y}^k\|_\T^2\Big\},\label{xiaoy}
\end{align}
where  $\S$ and $\T$ are two self-adjoint and positive semidefinite linear operators. It was shown from Li et al. \cite{spADMM,bsGs} that, if $\S$ and $\T$ are chosen properly, both subproblems can be split into some smaller ones and then solved separately in a sGS manner. However, if $f_1(x)$ and $h_1(y)$ are non-quadratic, it seemly difficult to split into some small pieces so that the favorable structure 'nonsmooth+smooth' cannot be used any more.
We must emphasize that the approach of (\ref{xiaox}) and (\ref{xiaoy}) was firstly proposed by Xiao et al. \cite{spGADMM} which has been numerically demonstrated very efficient to solve doubly non-negative semidefinite programming problems with moderate accuracy.

In optimization literature, one popular way to approximate a continuously differentially convex function into a quadratic is the using of majorization \cite{COVA}, such as, Hong et al. \cite{HONG} and Cui et al. \cite{CUIJOTA}.
Particularly, Li et al. \cite{miPADMM} majorized the augmented Lagrangian function involved in (\ref{cadmm}) when $f_1(x)$ and $g_1(y)$ are non-quadratic to make the subproblems become a composite convex quadratic minimization.
Therefore, the resulting problems were solved efficiently with respect to each variable in a sGS order \cite{bsGs}. The attractive feather of using sGS is to make this iterative scheme fill into the framework of Fazel et al. \cite{FST}.
In a similar way, Qin et al. \cite{MGADMM} also used a majorization technique to the generalized ADMM of Eckstein \& Bertsekas \cite{GADMM} to make it more flexible and robust. Extensive numerical experiments demonstrated that the generalized  ADMM with a suitable relaxation factor achieved better performance than the method of Li et al. \cite{miPADMM}.
Nevertheless, the performance of the semi-proximal generalized ADMM stated in (\ref{gadmm}), (\ref{xiaox}), and (\ref{xiaoy}) still has not been studied.
Hence, one natural question is how can we use the majorization technique so that the corresponding subproblems more amenable to efficient computations when $f_1(x)$ and $h_1(y)$ are non-quadratic.

The main purpose of this paper is to use a majorization technique to the augmented Lagrangian function (\ref{alf}) to make the resulting subproblems (\ref{xiaox}) and (\ref{xiaoy}) more easier solve, and hence, it will enhance the capabilities of the generalized ADMM of Xiao et al. \cite{spGADMM}.  At the beginning, we must clarify that the reason why we focus on the generalized ADMM of Xiao et al. \cite{spGADMM} is due to the fact that this type of method is highly efficient than some state-of-the-art algorithms according to a series of numerical experiments.
Let $\mathbb{X}:=\mathbb{X}_1\times\ldots\times\mathbb{X}_s$ and $\mathbb{Y}:=\mathbb{Y}_1\times\ldots\times\mathbb{Y}_t$ with positive constants $s$ and $t$. At each iteration, we use majorized functions to replace  $f_1(x)$ and $h_1(y)$ at the associated augmented Lagrangian function, and then both subproblems have ``nonsmooth+quadratic" structures. If $f_2(x):=f_2(x_1)$ and $h_2(y):=h_2(y_1)$ are simple functions, the $x$-subproblem ({\itshape resp.} $y$-subproblem) can be solved individually in the order of $x_s\rightarrow\ldots\rightarrow x_2\rightarrow x_1\rightarrow x_2\rightarrow\ldots\rightarrow x_s$ ({\itshape resp.} $y_t\rightarrow\ldots\rightarrow y_2\rightarrow y_1\rightarrow y_2\rightarrow\ldots\rightarrow y_t$) by making full use of the structures of $f_2(x_1)$ and $h_2(y_1)$.
Then, from the sGS decomposition theorem of Li et al. \cite{bsGs}, it is easy to show that this cycle is equivalent to adding proximal point terms with proper linear operators $\S$ and $\T$. We draw the difference in the iterative points between our method and the methods in \cite{MGADMM,miPADMM}, that is
$$
\begin{array}{rrcl}
[\text{Methods in \cite{miPADMM,MGADMM}	}] &\ldots\rightarrow(x^k,y^k)&\longrightarrow&(x^{k+1},y^{k+1})\rightarrow\ldots\\[2mm]
[\text{Our method}] &\ldots\rightarrow(x^k,y^k)&\rightarrow(\widetilde{x}^{k+1},\widetilde{y}^{k+1})\rightarrow&(x^{k+1},y^{k+1})\rightarrow\ldots
\end{array}
$$
which shows that the new point $(x^{k+1},y^{k+1})$ is from the relaxation point $(\widetilde{x}^{k+1},\widetilde{y}^{k+1})$ but not the previous $(x^k,y^k)$ as in \cite{miPADMM,MGADMM}.
We must emphasize that the relaxation point $(\widetilde{x}^{k},\widetilde{y}^{k})$ will lead to more technical difficulties, so that the theoretical analysis can not be obtained by mimicking the aforementioned methods in \cite{miPADMM,MGADMM}.
Most of the remainder of this paper will focus on theoretical analysis, that is, we prove that the sequence $\{x^k,y^k\}$ generated by our method converges to a Karush-Kuhn-Tucker (KKT) point of problem (\ref{prob1}) under some technical conditions. Finally, we do numerical experiments on a class of simulated convex composite optimization problems. The numerical results illustrate that the proposed method performs better than the M-ADMM of Li et al. \cite{miPADMM} and the M-GADMM of Qin et al. \cite{MGADMM}.

The remaining parts of this paper are organized as follows.  In section \ref{section3}, we propose the majorized and generalized ADMM to solve the composite convex problem (\ref{prob1}) in the case of $f_1(x)$ and $h_1(y)$ being non-quadratic and then it followed by some important properties.  Then, we focus on the convergence analysis of the proposed algorithm in section \ref{section4}. In section \ref{section5}, we are devoted to implementation issue to show the potential numerical efficiency of our proposed algorithm. Finally, we conclude this paper with some remarks in section \ref{section6}.

\setcounter{equation}{0}
\section{A generalized ADMM with majorization}\label{section3}
At the beginning of this section, we give some preliminaries needed in the subsequent developments.
Let $f:\mathbb{X}\rightarrow(-\infty,+\infty]$ be a closed proper convex function. The effective domain of $f$, which is denoted by $\dom(f)$, is defined as $\dom(f):=\{x:f(x)<+\infty\}$. A vector $x^*$ is said to be a subgradient of $f$ at point $x$ if $f(z)\geq f(x)+\langle x^*,z-x\rangle$ for all $z\in\mathbb{X}$. The set of all subgradients of $f$ at $x$ is called the subdifferential of $f$ at $x$ and is denoted by $\partial f(x)$ or $\partial f$.
It is well-known from \cite{COVA}
that $\partial f$ is a  maximal monotone operator. Because $f_1(x)$ and $h_1(y)$ are smooth convex functions with Lipschitz continuous gradients, we know that there exist self-adjoint and positive semidefinite linear operators such that $\widehat{\Sigma}_{f_1}\succeq \Sigma_{f_1}$ and $\widehat{\Sigma}_{h_1}\succeq \Sigma_{h_1}$, and for any $x, x' \in \mathbb{X}$ and any $y, y' \in \mathbb{Y}$, it holds that
\begin{equation}\label{f1cov}
	\frac{1}{2}\|x-x'\|_{\Sigma_{f_1}}^2 \le f_1(x) - f_1(x') - \langle x-x',\nabla f_1(x') \rangle \le \displaystyle{\frac{1}{2}}\|x-x'\|_{\widehat{\Sigma}_{f_1}}^2,
\end{equation}
\begin{equation}\label{h1cov}
	\frac{1}{2}\|y-y'\|_{\Sigma_{h_1}}^2 \le h_1(y) - h_1(y') - \langle y-y',\nabla h_1(y') \rangle \le \displaystyle{\frac{1}{2}}\|y-y'\|_{\widehat{\Sigma}_{h_1}}^2.
\end{equation}
Using the majorization technique, we construct the majorized functions for $f_1(x)$ and $h_1(y)$ as
\begin{equation}\label{hatf1}
	\hat{f}_1(x,x') := f_1(x') + \langle x-x',\nabla f_1(x') \rangle + \displaystyle{\frac{1}{2}}\|x-x'\|_{\hat{\Sigma}_{f_1}}^2,
\end{equation}
\begin{equation}\label{hath1}
	\hat{h}_1(y,y') := h_1(y') + \langle y-y',\nabla h_1(y') \rangle + \displaystyle{\frac{1}{2}}\|y-y'\|_{\hat{\Sigma}_{h_1}}^2.
\end{equation}
And then, using both majorized functions to replace the functions $f_1(x)$ and $h_1(y)$ in (\ref{alf}), we can get the majorized augmented Lagrangian function
\begin{equation}\label{majalf}
	\hat{\L}_{\sigma}\big(x,y;(z,x',y')\big) := \hat{f}_1(x,x') + f_2(x) + \hat{h}_1(y,y') + h_2(y) + \langle z, \A^*x + \B^*y - c\rangle +\frac{\sigma}{2}\|\A^*x + \B^*y - c\|^2.
\end{equation}
Clearly, it is a composite convex quadratic function except for the terms $f_2(x)$ and $h_2(y)$.

For subsequent developments, we need the following constraint qualification.
\begin{assumption}\label{assum}
	There exists $(x^0, y^0)\in ri(\dom(f_2) \times \dom (h_2)) \cap \Omega$, where $\Omega :=\{(x,y)\in \mathbb{X}\times\mathbb{Y}\mid \A^*x + \B^*y = c \}$.
\end{assumption}

Under Assumption (\ref{assum}), it is from \cite[Corollaries 28.2.2 and 28.3.1]{COVA}, we can get the optimality conditions of the problem (\ref{prob1}).
\begin{theorem}\label{optcond}
	If the Assumption (\ref{assum}) hold, then $(\bar{x}, \bar{y})\in \mathbb{X}\times\mathbb{Y}$ is an optimal solution to problem (\ref{prob1}) if and only if there
	exists a Lagrangian multiplier $\bar{z}\in\mathbb{Z}$ such that $(\bar{x},\bar{y},\bar{z})$ satisfies the following KKT system:
	\begin{equation}\label{kktcond}
		0\in \partial f_2(\bar{x}) + \nabla f_1(\bar{x}) + \A\bar{z},\quad 0\in \partial h_2(\bar{y}) + \nabla h_1(\bar{y}) + \B\bar{z},\quad \A^*\bar{x} + \B^*\bar{y} - c = 0,
	\end{equation}
	where $\bar z\in\mathbb{Z}$ is an optimal solution to the dual problem of (\ref{prob1}).
\end{theorem}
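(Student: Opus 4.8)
The plan is to reduce the two-sided optimality characterization to a single Fermat inclusion and then split that inclusion using convex subdifferential calculus. Writing $\theta(x,y):=f_1(x)+f_2(x)+h_1(y)+h_2(y)$ and letting $\delta_\Omega$ be the indicator function of the feasible set $\Omega=\{(x,y)\mid\A^*x+\B^*y=c\}$, the pair $(\bar x,\bar y)$ solves (\ref{prob1}) precisely when it minimizes $\theta+\delta_\Omega$, and by Fermat's rule this is equivalent to $0\in\partial(\theta+\delta_\Omega)(\bar x,\bar y)$. The whole task is then to peel this one inclusion apart into the three conditions of (\ref{kktcond}).

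The decisive step is the subdifferential sum rule, and this is exactly where Assumption \ref{assum} is used: the relative-interior condition $(x^0,y^0)\in\ri(\dom(f_2)\times\dom(h_2))\cap\Omega$ is the constraint qualification that permits the decomposition $\partial(\theta+\delta_\Omega)=\partial\theta+\partial\delta_\Omega$ via \cite{COVA}. Since $f_1$ and $h_1$ are differentiable and the problem separates into smooth and nonsmooth parts, I would write $\partial\theta(\bar x,\bar y)=\big(\nabla f_1(\bar x)+\partial f_2(\bar x)\big)\times\big(\nabla h_1(\bar y)+\partial h_2(\bar y)\big)$, so the smooth terms contribute gradients and the nonsmooth terms contribute subdifferentials.

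Next I would compute $\partial\delta_\Omega(\bar x,\bar y)$, the normal cone to $\Omega$ at a feasible point. Because $\Omega$ is the affine set cut out by the linear map $(x,y)\mapsto\A^*x+\B^*y$, whose adjoint sends $z\mapsto(\A z,\B z)$, the normal cone equals the range of that adjoint, namely $\{(\A z,\B z)\mid z\in\mathbb{Z}\}$; this is the step that manufactures the multiplier. Feeding this into $0\in\partial\theta(\bar x,\bar y)+\partial\delta_\Omega(\bar x,\bar y)$ yields some $\bar z\in\mathbb{Z}$ with $0\in\nabla f_1(\bar x)+\partial f_2(\bar x)+\A\bar z$ and $0\in\nabla h_1(\bar y)+\partial h_2(\bar y)+\B\bar z$, which together with feasibility $\A^*\bar x+\B^*\bar y=c$ are exactly (\ref{kktcond}).

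For the reverse implication no qualification is needed, and I would verify it directly: if the two inclusions hold then convexity shows that $(\bar x,\bar y)$ globally minimizes the ordinary Lagrangian $\ell(x,y):=\theta(x,y)+\langle\bar z,\A^*x+\B^*y-c\rangle$; since $(\bar x,\bar y)$ is feasible the coupling term vanishes there, so $(\bar x,\bar y)$ minimizes $\theta$ over $\Omega$ and solves (\ref{prob1}). To close the statement I would identify $\bar z$ as a dual optimal solution by appealing to the saddle-point and strong-duality result of \cite{COVA}: under Assumption \ref{assum} there is no duality gap, hence $(\bar x,\bar y,\bar z)$ is a saddle point of $\ell$ and $\bar z$ solves the dual of (\ref{prob1}). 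I expect the only genuinely delicate point to be the careful invocation of the sum rule under the relative-interior hypothesis on the product domain; the normal-cone identity and the sufficiency direction are routine.
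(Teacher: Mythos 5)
Your proposal is correct, but it takes a genuinely different (and far more explicit) route than the paper, which in fact contains no proof at all: the paper simply asserts that under Assumption \ref{assum} the theorem follows from \cite[Corollaries~28.2.2 and~28.3.1]{COVA}, i.e., it leans directly on Rockafellar's Kuhn--Tucker theory, where Corollary~28.2.2 produces a Kuhn--Tucker vector (Lagrange multiplier) under the relative-interior qualification and Corollary~28.3.1 identifies optimal primal solutions paired with such multipliers as saddle points of the ordinary Lagrangian, which also delivers the dual optimality of $\bar z$. You instead rebuild this machinery from more primitive ingredients: Fermat's rule for $\theta+\delta_\Omega$, the subdifferential sum rule under the condition $\ri(\dom f_2\times\dom h_2)\cap\ri(\Omega)\neq\emptyset$ (note $\ri(\Omega)=\Omega$ because $\Omega$ is affine, which is exactly why Assumption \ref{assum} is the right qualification), the separable decomposition $\partial\theta(\bar x,\bar y)=\bigl(\nabla f_1(\bar x)+\partial f_2(\bar x)\bigr)\times\bigl(\nabla h_1(\bar y)+\partial h_2(\bar y)\bigr)$, and the normal-cone identity $N_\Omega(\bar x,\bar y)=\{(\A z,\B z)\mid z\in\mathbb{Z}\}$, which is where the multiplier is manufactured. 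Both arguments hinge on the same relative-interior hypothesis, so nothing is gained or lost in generality; what your version buys is a self-contained derivation that makes visible exactly where Assumption \ref{assum} enters, while the paper's citation buys brevity and defers to standard Lagrangian duality. One small streamlining available to you: in the last step you do not need strong duality as an external input, since the KKT inclusions already show that $(\bar x,\bar y)$ minimizes $\theta(x,y)+\langle\bar z,\A^*x+\B^*y-c\rangle$ over all $(x,y)$, hence the dual objective at $\bar z$ equals the primal optimal value, and weak duality alone then forces $\bar z$ to be dual optimal.
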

Because $f_2(x)$ and $h_2(y)$ are convex functions, the KKT system (\ref{kktcond}) is equivalent to finding a triple of points $(\bar{x},\bar{y},\bar{z})\in\mathbb{W} :=\mathbb{X}\times\mathbb{Y}\times\mathbb{Z}$ such that for any $(x, y)\in\mathbb{X}\times\mathbb{Y}$ the following inequality holds
\begin{equation}\label{covineq}
	\big(f_2(x) + h_2(y)\big) - \big(f_2(\bar{x}) + h_2(\bar{y})\big) + \big\langle x-\bar{x},\nabla f_1(\bar{x})+A\bar{z}\big\rangle + \big\langle y-\bar{y}, \nabla h_1(\bar{y})+B\bar{z}\big\rangle \ge 0.
\end{equation}
The inequality will be used frequently in the theoretical analysis part.

In light of above preliminary results, we are ready to the construct our algorithm.
For convenience, we denote
$$
\F:=\widehat{\Sigma}_{f_1} + \S + \sigma \A\A^*
\quad \text{and} \quad
\H:= \widehat{\Sigma}_{h_1} + \T + \sigma \B\B^*.
$$
At the current iteration, if we use the majorized augmented Lagrangian function (\ref{majalf}) to replace its standard form (\ref{alf}), the $x$-subproblem in (\ref{xiaox}) will reduce to
\begin{align*}
x^{k} =& \argmin_{x}\Big\{\hat{\L}_{\sigma}(x,\widetilde{y}^k;(\widetilde{z}^k,\widetilde{x}^k,\widetilde{y}^k)) + \frac{1}{2}\|x - \widetilde{x}^k\|_\S^2\Big\}\\[3mm]
=&\argmin_{x}\Big\{f_2(x) + \frac{1}{2}\langle x, \F x\rangle + \Big\langle \nabla f_1(\widetilde{x}^k) + \sigma \A(\A^*\widetilde{x}^{k} + \B^*\widetilde{y}^{k} - c + \sigma^{-1}\widetilde{z}^k) - \F \widetilde{x}^k, x\Big\rangle\Big\},
\end{align*}
and the $y$-subproblem in (\ref{xiaoy}) will reduce to
\begin{align*}
y^{k} =& \argmin_{y}\Big\{\hat{\L}_{\sigma}(x^k,y;(z^k,x^k,\widetilde{y}^k)) + \displaystyle{\frac{1}{2}}\|y - \widetilde{y}^k\|_\T^2\Big\}\\[3mm]
=&\argmin_{y}\Big\{h_2(y) + \displaystyle{\frac{1}{2}}\langle y, \H y\rangle + \Big\langle \nabla h_1(\widetilde{y}^k) + \sigma \B(\A^*x^{k} + \B^*\widetilde{y}^{k} - c + \sigma^{-1}z^{k}) - \H \widetilde{y}^k, y\Big\rangle\Big\}.
\end{align*}
Clearly, both subproblems have the favorable structure of ``nonsmooth + quadratic" so that they can be solved efficiently if the self-adjoint operators $\S$ and $\T$ are chosen properly. Taking the $x$-subproblem as an example, in the case of $f_2(x)=f_2(x_1)$ being a simple function, we denote $Q_x:=\widehat{\Sigma}_{f_1} + \sigma \A\A^*$ and then decompose it into $Q_x=U_x+\Sigma_x+U^\top_x$, where $U_x$ is a strictly upper triangular submatrix and $\Sigma_x$ is the diagonal of $Q_x$.  Let $\S:=U_x\Sigma_x^{-1}U_x^\top$, then the $x$-subproblem can be computed in a sGS order $x_s\rightarrow\ldots\rightarrow x_2\rightarrow x_1\rightarrow x_2\rightarrow\ldots\rightarrow x_s$, which indicates that the $x$-subproblem is split into a series of smaller problems with each $x_i$ and solved separately.
For more theoretical details on this iterative scheme, one may refer to the excellent papers of Li et al. \cite{spGADMM,miPADMM}.

In light of the above analyses, we list the  generalized ADMM with majorization (abbr. G-ADMM-M) for solving problem (\ref{prob1}) as follows.

\begin{framed}
\noindent
{\bf Algorithm: G-ADMM-M (Generalized ADMM with Majorization).}
\vskip 1.0mm \hrule \vskip 1mm
\noindent
\begin{itemize}
\item[Step 0.]  Choose $\sigma> 0$ and $\rho\in (0,2)$. Choose self-adjoint positive semidefinite linear operators $\S$ and $\T$  in $\mathbb{X}$ and $\mathbb{Y}$ such that $\F\succ 0$ and $\H\succ 0$.
 Input an initial point $\widetilde{\omega}^0 := (\widetilde{x}^0,\widetilde{y}^0,\widetilde{z}^0)\in \dom(f_2)\times \dom( h_2)\times \mathbb{Z}$. Let $k:=0$.

\item[Step 1.] Compute
\begin{equation}\label{mipgadmm}
\left\{
\begin{array}{l}
x^{k} = \argmin_{x}\Big\{f_2(x) + \displaystyle{\frac{1}{2}}\langle x, \F x\rangle + \langle \nabla f_1(\widetilde{x}^k) + \sigma \A(\A^*\widetilde{x}^{k} + \B^*\widetilde{y}^{k} - c + \sigma^{-1}\widetilde{z}^k) - \F \widetilde{x}^k, x\rangle\Big\},\\[3mm]
z^{k} = \widetilde{z}^k + \sigma(\A^*x^{k} + \B^*\widetilde{y}^{k} - c),\\[3mm]
y^{k} = \argmin_{y}\Big\{h_2(y) + \displaystyle{\frac{1}{2}}\langle y, \H y\rangle + \langle \nabla h_1(\widetilde{y}^k) + \sigma \B(\A^*x^{k} + \B^*\widetilde{y}^{k} - c + \sigma^{-1}z^{k}) - \H \widetilde{y}^k, y\rangle\Big\}.
\end{array}
\right.
\end{equation}

\item[Step 2.] Terminate if a termination criterion is satisfied. Output $(x^k,y^k,z^k)$. Otherwise, compute
\begin{equation}\label{omegak}
\widetilde{\omega}^{k+1} = \widetilde{\omega}^k + \rho(\omega^k - \widetilde{\omega}^k).
\end{equation}
Let $k:=k+1$ and go to Step 1.
\end{itemize}
\vspace{-.1cm}
\end{framed}

For the convergence analysis of the G-ADMM-M, we present a couple of useful lemmas. The results are well known in the field of numerical algebra, hence, we omit their proof here.
\begin{lemma}\label{salop1}
For any vectors $u$, $v$ in the same Euclidean vector space $\Im$, and any self-adjoint positive semidefinite linear operator $\G: \Im\rightarrow\Im$, it holds that
\begin{equation}\label{noneq}
\|u\|_\G^2 + \|v\|_\G^2 \ge \displaystyle{\frac{1}{2}}\|u - v\|_\G^2,
\end{equation}
and
\begin{equation}\label{ident1}
2\langle u, \G v\rangle = \|u\|_\G^2 + \|v\|_\G^2 - \|u - v\|_\G^2 = \|u + v\|_\G^2 - \|u\|_\G^2 - \|v\|_\G^2.
\end{equation}
\end{lemma}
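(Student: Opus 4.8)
The plan is to treat $\|\cdot\|_\G$ as the semi-norm induced by $\G$, that is $\|u\|_\G^2 = \langle u, \G u\rangle$, and to derive everything from the bilinearity of $\langle\cdot,\cdot\rangle$ together with the two defining properties of $\G$: self-adjointness, so that $\langle u, \G v\rangle = \langle \G u, v\rangle = \langle v, \G u\rangle$, and positive semidefiniteness, so that $\langle w, \G w\rangle \ge 0$ for every $w$. I would stress from the outset that positive-definiteness is never invoked, which is essentially the only subtlety worth monitoring.

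First I would establish the two polarization identities in (\ref{ident1}). Expanding $\|u-v\|_\G^2 = \langle u-v, \G(u-v)\rangle$ by bilinearity gives $\|u\|_\G^2 + \|v\|_\G^2 - \langle u, \G v\rangle - \langle v, \G u\rangle$, and the self-adjointness of $\G$ collapses the two cross terms into $2\langle u, \G v\rangle$, yielding $2\langle u, \G v\rangle = \|u\|_\G^2 + \|v\|_\G^2 - \|u-v\|_\G^2$. The identical computation applied to $\|u+v\|_\G^2$ produces $2\langle u, \G v\rangle = \|u+v\|_\G^2 - \|u\|_\G^2 - \|v\|_\G^2$, which is the second equality in (\ref{ident1}).

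The inequality (\ref{noneq}) then follows in one line. Applying positive semidefiniteness to the vector $u+v$ gives $0 \le \|u+v\|_\G^2 = \|u\|_\G^2 + \|v\|_\G^2 + 2\langle u, \G v\rangle$, hence $2\langle u, \G v\rangle \ge -(\|u\|_\G^2 + \|v\|_\G^2)$. Substituting this bound into the first identity of (\ref{ident1}) yields $\|u-v\|_\G^2 = \|u\|_\G^2 + \|v\|_\G^2 - 2\langle u, \G v\rangle \le 2(\|u\|_\G^2 + \|v\|_\G^2)$, and dividing by two gives exactly (\ref{noneq}).

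There is no genuine obstacle here; the statement is the standard parallelogram-type law for semidefinite quadratic forms, which is why the authors omit it. If anything merits care it is resisting the temptation to route the inequality through a Cauchy--Schwarz bound $|\langle u, \G v\rangle| \le \|u\|_\G\,\|v\|_\G$ (which would also close the argument, combined with $2\|u\|_\G\,\|v\|_\G \le \|u\|_\G^2 + \|v\|_\G^2$), and instead using the cleaner $\|u+v\|_\G^2 \ge 0$ route, so that the whole lemma rests only on the algebraic identity (\ref{ident1}) and the sign of a single square.
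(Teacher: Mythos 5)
Your proof is correct. The paper gives no proof of this lemma at all (the authors declare it well known and omit it), and your argument---the polarization identities via bilinearity and self-adjointness of $\G$, followed by applying positive semidefiniteness to the single vector $u+v$ to obtain $\|u+v\|_\G^2\ge 0$ and hence the inequality (\ref{noneq})---is precisely the standard derivation the authors presuppose, with the correct care taken to avoid assuming positive definiteness.
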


\begin{lemma}\label{salop2}
	For any vectors $u_1$, $u_2$, $v_1$, and $v_2$ in the same Euclidean vector space $\Im$, and any self-adjoint positive semidefinite linear operator $\G: \Im\rightarrow\Im$, we have the identity
	\begin{equation}\label{ident2}
		2\langle u_1 - u_2, \G (v_1 - v_2)\rangle = \|u_1 - v_2\|_\G^2 + \|u_2 - v_1\|_\G^2 - \|u_1 - v_1\|_\G^2 - \|u_2 - v_2\|_\G^2.
	\end{equation}
\end{lemma}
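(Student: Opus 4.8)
The plan is to verify (\ref{ident2}) by a direct algebraic expansion, leaning on the polarization identity (\ref{ident1}) already recorded in Lemma \ref{salop1}. Since the claim is a purely algebraic statement about a self-adjoint positive semidefinite operator $\G$, no analytic machinery is needed; the whole matter reduces to careful bookkeeping of inner products. First I would expand the left-hand side by bilinearity of $\langle\cdot,\cdot\rangle$ and linearity of $\G$, writing
$$
2\langle u_1 - u_2, \G(v_1 - v_2)\rangle = 2\langle u_1, \G v_1\rangle - 2\langle u_1, \G v_2\rangle - 2\langle u_2, \G v_1\rangle + 2\langle u_2, \G v_2\rangle.
$$

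Next I would apply the polarization identity $2\langle u, \G v\rangle = \|u\|_\G^2 + \|v\|_\G^2 - \|u - v\|_\G^2$ from (\ref{ident1}) to each of the four inner products on the right. This step silently uses that $\G$ is self-adjoint, which is precisely what gives $\langle u, \G v\rangle = \langle v, \G u\rangle$ and lets $\|\cdot\|_\G$ behave like a genuine seminorm; positive semidefiniteness guarantees $\|\cdot\|_\G^2 \ge 0$ but is not actually required for the identity itself.

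Having substituted, I would collect terms. Each of the four single-vector norms $\|u_1\|_\G^2$, $\|u_2\|_\G^2$, $\|v_1\|_\G^2$, $\|v_2\|_\G^2$ appears exactly twice with opposite signs and therefore cancels, leaving precisely $\|u_1 - v_2\|_\G^2 + \|u_2 - v_1\|_\G^2 - \|u_1 - v_1\|_\G^2 - \|u_2 - v_2\|_\G^2$, which is the right-hand side of (\ref{ident2}). There is no genuine obstacle here; the only place to be careful is tracking the signs when pairing $u_i$ with $v_j$, since the two ``mixed'' norms $\|u_1 - v_2\|_\G^2$ and $\|u_2 - v_1\|_\G^2$ enter with a plus sign while the two ``matched'' norms $\|u_1 - v_1\|_\G^2$ and $\|u_2 - v_2\|_\G^2$ enter with a minus sign. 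As an independent check, a direct expansion of all four squared norms on the right via $\|w\|_\G^2 = \langle w, \G w\rangle$, again invoking self-adjointness to merge the cross terms, yields the same cancellation and confirms the identity.
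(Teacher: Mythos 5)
Your proof is correct: expanding $2\langle u_1-u_2,\G(v_1-v_2)\rangle$ by bilinearity and applying the polarization identity (\ref{ident1}) to each of the four cross terms makes the single-vector norms $\|u_1\|_\G^2,\|u_2\|_\G^2,\|v_1\|_\G^2,\|v_2\|_\G^2$ cancel in pairs, leaving exactly the right-hand side of (\ref{ident2}). Note that the paper itself gives no proof of this lemma (it is dismissed as well known in numerical algebra), so there is no argument to compare against; yours is the standard derivation and correctly identifies that only self-adjointness of $\G$ is actually used, positive semidefiniteness being irrelevant to the identity itself.
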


Denote $\mathbb{W}^*$ be the set of solutions satisfy (\ref{kktcond}), which is nonempty under the Assumption \ref{assum}, i.e., the solution set of problem (\ref{prob1}) is nonempty. For $(\bar{x},\bar{y},\bar{z})\in\mathbb{W}^*$ and any $(x,y,z)\in\mathbb{W}$, we denote $x_e := x - \bar{x}$, $y_e := y - \bar{y}$, and $z_e := z -\bar{z}$ for convenience. Using these notations, we have the following three properties which will be used in our desiring convergence theorem analysis.

\begin{lemma}\label{lem3}
Suppose that Assumption \ref{assum} holds. Let $\{(x^k, y^k, z^k)\}$ be generated by Algorithm G-ADMM-M, and $(\bar{x},\bar{y},\bar{z})\in\mathbb{W}^*$. Then for any $\rho\in(0,2)$,  $\sigma>0$ and $k\ge 0$, we have
\begin{equation}\label{ident3}
\begin{array}{l}
\Big\langle \A^*x_e^{k+1} + \B^*y_e^k, z_e^{k+1} + \sigma(\rho - 1)\A^*x_e^{k+1}\Big\rangle \\[3mm]
=\displaystyle{\frac{1}{2\sigma\rho}}\Big(\|z_e^{k+1} + \sigma(\rho - 1)A^*x_e^{k+1}\|^2 - \|z_e^{k} + \sigma(\rho - 1)\A^*x_e^{k}\|^2\Big) + \displaystyle{\frac{\sigma\rho}{2}}\|\A^*x_e^{k+1} + \B^*y_e^k\|^2.
\end{array}
\end{equation}
\end{lemma}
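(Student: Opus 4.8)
The plan is to read the right-hand side of \eqref{ident3} as a telescoping difference plus a square, and to reduce the whole statement to the polarization identity \eqref{ident1} of Lemma \ref{salop1} specialized to the identity operator. Abbreviate $p^k := z_e^k + \sigma(\rho-1)\A^*x_e^k$ and $d^{k+1} := \A^*x_e^{k+1} + \B^*y_e^k$, so that \eqref{ident3} is exactly
\begin{equation*}
\langle d^{k+1}, p^{k+1}\rangle = \frac{1}{2\sigma\rho}\big(\|p^{k+1}\|^2 - \|p^k\|^2\big) + \frac{\sigma\rho}{2}\|d^{k+1}\|^2 .
\end{equation*}
Taking $\G$ to be the identity, $u = \sigma\rho\, d^{k+1}$, and $v = p^{k+1}$ in \eqref{ident1} and dividing by $2\sigma\rho$ gives $\langle d^{k+1}, p^{k+1}\rangle = \frac{\sigma\rho}{2}\|d^{k+1}\|^2 + \frac{1}{2\sigma\rho}\big(\|p^{k+1}\|^2 - \|p^{k+1} - \sigma\rho\, d^{k+1}\|^2\big)$. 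Hence the entire claim collapses to the single recursion
\begin{equation*}
p^{k+1} - \sigma\rho\, d^{k+1} = p^k , \qquad \text{equivalently} \qquad z_e^{k+1} - \sigma\A^*x_e^{k+1} - \sigma\rho\B^*y_e^k = z_e^k + \sigma(\rho-1)\A^*x_e^k .
\end{equation*}

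The substance of the proof is establishing this recursion, and the chief obstacle is the bookkeeping in passing between the two error sequences $\{z_e^k, x_e^k, y_e^k\}$ and the relaxed one $\{\widetilde z_e^k, \widetilde y_e^k\}$. First I would use the $z$-update $z^k = \widetilde z^k + \sigma(\A^*x^k + \B^*\widetilde y^k - c)$ together with the KKT feasibility $\A^*\bar x + \B^*\bar y = c$ from \eqref{kktcond}; subtracting $\bar z$ yields the representation $z_e^k = \widetilde z_e^k + \sigma(\A^*x_e^k + \B^*\widetilde y_e^k)$, and at the next index $z_e^{k+1} = \widetilde z_e^{k+1} + \sigma(\A^*x_e^{k+1} + \B^*\widetilde y_e^{k+1})$. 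Substituting the latter into the left-hand side of the recursion cancels the $\sigma\A^*x_e^{k+1}$ term and leaves $\widetilde z_e^{k+1} + \sigma\B^*\widetilde y_e^{k+1} - \sigma\rho\B^*y_e^k$.

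Next I would insert the relaxation step \eqref{omegak} written in error form, $\widetilde z_e^{k+1} = (1-\rho)\widetilde z_e^k + \rho z_e^k$ and $\widetilde y_e^{k+1} = (1-\rho)\widetilde y_e^k + \rho y_e^k$. The two contributions proportional to $\B^*y_e^k$ (the $+\sigma\rho\B^*y_e^k$ arising from $\widetilde y_e^{k+1}$ and the $-\sigma\rho\B^*y_e^k$ shift) cancel exactly, so the left-hand side reduces to $(1-\rho)\widetilde z_e^k + \rho z_e^k + \sigma(1-\rho)\B^*\widetilde y_e^k$. Substituting $z_e^k = \widetilde z_e^k + \sigma(\A^*x_e^k + \B^*\widetilde y_e^k)$ once more and collecting terms brings both sides of the recursion to the common expression $\widetilde z_e^k + \sigma\rho\A^*x_e^k + \sigma\B^*\widetilde y_e^k$, which proves the recursion and hence \eqref{ident3}. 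The only delicate point is keeping the coefficients of $\B^*\widetilde y_e^k$ and $\B^*y_e^k$ straight through the relaxation; beyond this linear algebra there is no analytic difficulty.
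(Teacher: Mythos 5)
Your proposal is correct and follows essentially the same route as the paper: both reduce the identity to the one-step recursion $z_e^{k+1}+\sigma(\rho-1)\A^*x_e^{k+1} = z_e^k+\sigma(\rho-1)\A^*x_e^k + \sigma\rho(\A^*x_e^{k+1}+\B^*y_e^k)$ (the paper's equation (\ref{lagmtran}) in error form) and then apply the polarization identity (\ref{ident1}) of Lemma \ref{salop1} with $\G$ the identity. The only difference is organizational: the paper derives the recursion by a direct chain of substitutions for $z^{k+1}$ in the original variables, while you verify it in error variables by collapsing both sides to the common expression $\widetilde z_e^k+\sigma\rho\A^*x_e^k+\sigma\B^*\widetilde y_e^k$; the algebra is the same.
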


\begin{proof}
From the iterative scheme(\ref{mipgadmm} and \ref{omegak}) in Algorithm G-ADMM-M, we have
\begin{equation}\label{lagmtran}
\begin{aligned}
z^{k+1} &= \widetilde{z}^{k+1} + \sigma(\A^*x^{k+1} + \B^*\widetilde{y}^{k+1} - c)\\[3mm]
&= z^k - (1-\rho)(z^k - \widetilde{z}^k) + \sigma\rho(\A^*x^{k+1} + \B^*y^k - c) + \sigma(1-\rho)(\A^*x^{k+1} + \B^*\widetilde{y}^k - c)\\[3mm]
&= z^k + \sigma\rho(\A^*x^{k+1} + \B^*y^k - c) + \sigma(\rho-1)(\A^*x^{k} - \A^*x^{k+1}),
\end{aligned}
\end{equation}
which indicates
$$
\big[z_e^{k+1} + \sigma(\rho - 1)\A^*x_e^{k+1}\big] - \big[z_e^{k} + \sigma(\rho - 1)\A^*x_e^{k}\big] = \sigma\rho(\A^*x_e^{k+1} + \B^*y_e^k).
$$
According to Lemma \ref{salop1}, we get
\begin{equation}\label{ident4}
\begin{array}{l}
2\sigma\rho\langle \A^*x_e^{k+1} + \B^*y_e^k, z_e^{k+1} + \sigma(\rho - 1)\A^*x_e^{k+1}\rangle \\[3mm]
=\sigma^2\rho^2\|\A^*x_e^{k+1} + \B^*y_e^k\|^2 + \|z_e^{k+1} + \sigma(\rho - 1)A^*x_e^{k+1}\|^2 - \|z_e^{k} + \sigma(\rho - 1)\A^*x_e^{k}\|^2.
\end{array}
\end{equation}
Combining with (\ref{ident4}), it yields the desired result (\ref{ident3}).
\end{proof}

\begin{lemma}\label{lem4}
Suppose that Assumption \ref{assum} holds. Let $\{(x^k, y^k, z^k)\}$ be generated by Algorithm G-ADMM-M, and $(\bar{x},\bar{y},\bar{z})\in\mathbb{W}^*$. Then for any $\rho\in(0,2)$, $\sigma>0$ and $k\ge 0$, we have
\begin{equation}\label{ident5}
\begin{array}{l}
\Big\langle \B^*y_e^k, z^{k} + \sigma(\A^*x_e^k + \B^*y_e^k) - z^{k+1} - \sigma(\rho - 1)\A^*x_e^{k+1}\Big\rangle \\[3mm]
=\displaystyle{\frac{\sigma(2-\rho)}{2}}\Big(\|\A^*x_e^{k} + \B^*y_e^k\|^2 - \|\A^*x_e^{k}\|^2\Big) - \displaystyle{\frac{\sigma\rho}{2}}\Big(\|\A^*x_e^{k+1} + B^*y_e^k\|^2 - \|\A^*x_e^{k+1}\|^2\Big).
\end{array}
\end{equation}
\end{lemma}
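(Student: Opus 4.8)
The plan is to reduce the left-hand side to a linear combination of the three residual-type vectors $\A^*x_e^k$, $\A^*x_e^{k+1}$, and $\B^*y_e^k$, and then expand both sides through elementary polarization identities until the coefficients match. The natural starting point is the multiplier recursion already isolated in the proof of Lemma \ref{lem3}, namely
\[
\big[z_e^{k+1} + \sigma(\rho-1)\A^*x_e^{k+1}\big] - \big[z_e^{k} + \sigma(\rho-1)\A^*x_e^{k}\big] = \sigma\rho\big(\A^*x_e^{k+1} + \B^*y_e^k\big),
\]
which I would rearrange to solve for the multiplier difference. A short computation gives
\[
z_e^{k} - z_e^{k+1} = -\sigma\A^*x_e^{k+1} - \sigma\rho\,\B^*y_e^k - \sigma(\rho-1)\A^*x_e^k.
\]

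First I would substitute this expression for $z^{k} - z^{k+1}$ (the $\bar z$ contributions cancel, so this equals $z_e^k - z_e^{k+1}$) into the bracketed vector appearing inside the inner product on the left-hand side. Collecting the coefficients of $\A^*x_e^{k+1}$, $\B^*y_e^k$, and $\A^*x_e^k$ should collapse that bracket to
\[
-\sigma\rho\,\A^*x_e^{k+1} + \sigma(1-\rho)\B^*y_e^k + \sigma(2-\rho)\A^*x_e^k,
\]
so that pairing with $\B^*y_e^k$ yields the three scalar contributions $-\sigma\rho\langle\B^*y_e^k,\A^*x_e^{k+1}\rangle$, $\sigma(1-\rho)\|\B^*y_e^k\|^2$, and $\sigma(2-\rho)\langle\B^*y_e^k,\A^*x_e^k\rangle$.

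To match this with the right-hand side, I would expand each squared-norm difference by the identity $\|u+v\|^2 - \|u\|^2 = 2\langle u,v\rangle + \|v\|^2$ (an immediate consequence of the polarization identity \eqref{ident1} in Lemma \ref{salop1}), taking $u=\A^*x_e^k$ or $u=\A^*x_e^{k+1}$ and $v=\B^*y_e^k$. This turns the right-hand side into $\sigma(2-\rho)\langle\A^*x_e^k,\B^*y_e^k\rangle - \sigma\rho\langle\A^*x_e^{k+1},\B^*y_e^k\rangle$ plus a $\|\B^*y_e^k\|^2$ term whose coefficient is $\tfrac{\sigma(2-\rho)}{2} - \tfrac{\sigma\rho}{2} = \sigma(1-\rho)$; comparing term by term then finishes the proof.

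Since the statement is an algebraic identity, there is no genuine analytic obstacle here; the one point of care is the correct tracking of the $\rho$-dependent coefficients when eliminating $z_e^{k}-z_e^{k+1}$, because a single sign slip would simultaneously corrupt the $\|\B^*y_e^k\|^2$ coefficient and both cross terms. I would guard against this by checking the $\rho=1$ special case by hand, where the recursion reduces to $z_e^{k+1}-z_e^k = \sigma(\A^*x_e^{k+1}+\B^*y_e^k)$ and the identity should collapse to the familiar unrelaxed ADMM form.
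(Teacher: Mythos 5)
Your proposal is correct and follows essentially the same route as the paper: both start from the multiplier recursion $z^{k+1}=z^k+\sigma\rho(\A^*x^{k+1}+\B^*y^k-c)+\sigma(\rho-1)\A^*(x^k-x^{k+1})$, collapse the bracketed vector to $-\sigma\rho\,\A^*x_e^{k+1}+\sigma(1-\rho)\B^*y_e^k+\sigma(2-\rho)\A^*x_e^k$ (the paper writes it as $-\sigma\rho(\A^*x_e^{k+1}+\B^*y_e^k)+\sigma(\A^*x_e^k+\B^*y_e^k)-\sigma(\rho-1)\A^*x_e^k$, which is the same vector), and finish with the polarization identity of Lemma \ref{salop1}. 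The only difference is directional and immaterial: the paper converts the left-hand inner products into norm differences, while you expand the right-hand norm differences into inner products and match coefficients, correctly obtaining the vanishing net coefficient $\sigma(1-\rho)$ on $\|\B^*y_e^k\|^2$ from both sides.
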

\begin{proof}
From (\ref{lagmtran}), we have
\begin{equation}\nonumber
\begin{array}{ll}
&z^{k} + \sigma(\A^*x_e^k + \B^*y_e^k) - z^{k+1} - \sigma(\rho - 1)\A^*x_e^{k+1} \\[3mm]
=&-\sigma\rho(\A^*x_e^{k+1} + \B^*y_e^k) + \sigma(\A^*x_e^{k} + \B^*y_e^k) - \sigma(\rho-1)\A^*x_e^{k}.
\end{array}
\end{equation}
Then we have
\begin{equation}\nonumber
\begin{array}{rl}
&\langle \B^*y_e^k, z^{k} + \sigma(\A^*x_e^k + \B^*y_e^k) - z^{k+1} - \sigma(\rho - 1)\A^*x_e^{k+1}\rangle \\[3mm]
=&-\sigma\rho\langle \B^*y_e^k, \A^*x_e^{k+1} + \B^*y_e^k\rangle + \sigma\langle \B^*y_e^k, \A^*x_e^{k} + \B^*y_e^k\rangle - \sigma(\rho-1)\langle \B^*y_e^k, \A^*x_e^{k}\rangle.
\end{array}
\end{equation}
According to the equality (\ref{ident1}) in Lemma \ref{salop1}, we have
\begin{equation}\nonumber
\begin{array}{rl}
&-\sigma\rho\langle \B^* y_e^k, \A^*x_e^{k+1} + \B^*y_e^k\rangle + \sigma\langle \B^* y_e^k, \A^*x_e^{k} + \B^*y_e^k\rangle - \sigma(\rho-1)\langle \B y_e^k, \A^*x_e^{k}\rangle\\[3mm]
= &- \frac{\sigma\rho}{2}\Big(\|\A^*x_e^{k+1} + \B^*y_e^k\|^2 + \|B^*y_e^k\|^2 - \|\A^*x_e^{k+1}\|^2\Big) + \displaystyle{\frac{\sigma}{2}}\Big(\|\A^*x_e^{k} + \B^*y_e^k\|^2 + \|\B^*y_e^k\|^2 - \|\A^*x_e^{k}\|^2\Big) \\[3mm]
& - \frac{\sigma(\rho- 1)}{2}\Big(\|\A^*x_e^{k} + \B^*y_e^k\|^2 - \|\B^*y_e^k\|^2 - \|\A^*x_e^{k}\|^2\Big)\\[3mm]
= &\frac{\sigma(2-\rho)}{2}\Big(\|A^*x_e^{k} + B^*y_e^k\|^2 - \|A^*x_e^{k}\|^2\Big) - \displaystyle{\frac{\sigma\rho}{2}}\Big(\|A^*x_e^{k+1} + B^*y_e^k\|^2 - \|A^*x_e^{k+1}\|^2\Big).
\end{array}
\end{equation}
The proof is complete.
\end{proof}

For notational convenience, we define
\begin{equation}\label{psik}
	\Psi_k(\bar{x},\bar{y},\bar{z}) := \frac{1}{\sigma\rho}\|z_e^k + \sigma(\rho-1)A^*x_e^{k}\|^2 + \sigma(2-\rho)\|A^*x_e^{k}\|^2 + \frac{1}{\rho}\|\widetilde{x}_e^{k+1}\|_{\widehat{\Sigma}_{f_1}+\S}^2 + \frac{1}{\rho}\|\widetilde{y}_e^k\|_{\widehat{\Sigma}_{h_1}+\T}^2,
\end{equation}
and
\begin{equation}\label{deltak}
	\begin{array}{l}
		\delta_k := \|\widetilde{x}^{k+1} - x^{k+1}\|_{\frac{1}{2}\Sigma_{f_1}}^2 + \|\widetilde{y}^{k} - y^{k}\|_{\frac{1}{2}\Sigma_{h_1}+(2-\rho)(\widehat{\Sigma}_{h_1}+\T)}^2 + (1-\lambda)(2-\rho)\|\widetilde{x}^{k} - x^{k}\|_{\widehat{\Sigma}_{f_1}+\S}^2\\[3mm]
		\qquad + \sigma(2\lambda-1)(2-\rho)\|\A^*x^{k+1} + \B^*y^{k} - c\|^2 + \frac{\sigma(1-\lambda)(2-\rho)}{2}\|\B^*(y^k-y^{k-1})\|^2\\[3mm]
		\qquad + \frac{\lambda(2-\rho)^2}{\rho}\|x^{k+1} - x^k\|_{\widehat{\Sigma}_{f_1}+\S+\sigma \A\A^*}^2,
	\end{array}
\end{equation}
where $\lambda>0$ will defined later.
Furthermore, we also define
\begin{equation}\label{thetak}
	\theta_k := \|\widetilde{x}^{k+1} - x^{k+1}\|_{\frac{1}{2}\Sigma_{f_1}-\hat{\Sigma}_{f_1}+(2-\rho)(\widehat{\Sigma}_{f_1}+\S)}^2 + \|\widetilde{y}^{k} - y^{k}\|_{\frac{1}{2}\Sigma_{h_1}-\hat{\Sigma}_{h_1}+(2-\rho)(\widehat{\Sigma}_{h_1}+\T)}^2,
\end{equation}

\begin{equation}\label{etak}
	\eta_k := \Big\langle \A^*x_e^{k+1}, z_e^{k+1} \Big\rangle + \Big\langle \B^*y_e^k, z_e^k + \sigma(\A^*x^k + \B^*y^k - c)\Big\rangle,
\end{equation}
and
\begin{equation}\label{xik}
	\xi_k := \|\widetilde{x}^{k+1} - x^{k+1}\|_{\widehat{\Sigma}_{f_1}}^2 + \|\widetilde{y}^{k} - y^{k}\|_{\widehat{\Sigma}_{h_1}}^2.
\end{equation}

For the global convergence for the Algorithm G-ADMM-M, the following inequality is essential.

\begin{lemma}\label{lem5}
Suppose that Assumption \ref{assum} holds. Let $\{(x^k, y^k, z^k)\}$ be generated by Algorithm G-ADMM-M, and $(\bar{x},\bar{y},\bar{z})\in\mathbb{W}^*$. Let $\Psi_k$ and $\theta_k$ be defined as in (\ref{psik}) and (\ref{thetak}). Then for any $\rho\in(0,2)$, $\sigma>0$ and $k\ge 0$, we have
\begin{equation}\label{pivineq}
\Psi_k(\bar{x},\bar{y},\bar{z}) - \Psi_{k+1}(\bar{x},\bar{y},\bar{z}) \ge \theta_k + \sigma(2-\rho)\|\A^*x^k + \B^*y^k - c\|^2.
\end{equation}
\end{lemma}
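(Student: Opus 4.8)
The plan is to prove (\ref{pivineq}) as a single-step descent estimate for the Lyapunov quantity $\Psi_k$. I would begin from the variational characterizations of the two subproblems in (\ref{mipgadmm}), combine them with the convexity inequality (\ref{covineq}) at a KKT point, and then turn every surviving inner product into the squared-norm blocks of $\Psi_k$ and $\theta_k$ by means of the majorization estimates (\ref{f1cov})--(\ref{h1cov}), the elementary identities of Lemmas \ref{salop1}--\ref{salop2}, and the telescoping Lemmas \ref{lem3}--\ref{lem4}.

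First I would record the first-order optimality conditions of (\ref{mipgadmm}). With $\F=\widehat{\Sigma}_{f_1}+\S+\sigma\A\A^*$ and $z^{k}=\widetilde z^k+\sigma(\A^*x^k+\B^*\widetilde y^k-c)$, the $x$-step gives $-\nabla f_1(\widetilde x^{k})-(\widehat{\Sigma}_{f_1}+\S)(x^{k}-\widetilde x^{k})-\A z^{k}\in\partial f_2(x^{k})$, and, using $\H=\widehat{\Sigma}_{h_1}+\T+\sigma\B\B^*$, the $y$-step gives $-\nabla h_1(\widetilde y^{k})-(\widehat{\Sigma}_{h_1}+\T)(y^{k}-\widetilde y^{k})-\B(z^{k}+\sigma(\A^*x^{k}+\B^*y^{k}-c))\in\partial h_2(y^{k})$. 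Reading the first relation at index $k+1$ tested against $x_e^{k+1}$ and the second at index $k$ tested against $y_e^{k}$, and adding (\ref{covineq}) evaluated at $(x^{k+1},y^{k})$ (equivalently, using monotonicity of $\partial f_2,\partial h_2$ relative to the KKT subgradients $-\nabla f_1(\bar x)-\A\bar z$ and $-\nabla h_1(\bar y)-\B\bar z$), the nonsmooth values drop out and I obtain $\eta_k\le(\text{gradient terms})-(\text{proximal terms})$. Here $\eta_k$ is exactly (\ref{etak}) after substituting $\A^*x^k+\B^*y^k-c=\A^*x_e^k+\B^*y_e^k$; the gradient terms are $\langle\nabla f_1(\bar x)-\nabla f_1(\widetilde x^{k+1}),x_e^{k+1}\rangle+\langle\nabla h_1(\bar y)-\nabla h_1(\widetilde y^{k}),y_e^{k}\rangle$; and the proximal terms are $\langle(\widehat{\Sigma}_{f_1}+\S)(x^{k+1}-\widetilde x^{k+1}),x_e^{k+1}\rangle+\langle(\widehat{\Sigma}_{h_1}+\T)(y^{k}-\widetilde y^{k}),y_e^{k}\rangle$.

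Next I would convert the three groups. For the gradient terms I would apply (\ref{f1cov}) (respectively (\ref{h1cov})) at the three pairs $(x^{k+1},\bar x)$, $(x^{k+1},\widetilde x^{k+1})$, and $(\bar x,\widetilde x^{k+1})$ so that all function values cancel, and then use (\ref{noneq}) from Lemma \ref{salop1}; this yields $-2\langle\nabla f_1(\bar x)-\nabla f_1(\widetilde x^{k+1}),x_e^{k+1}\rangle\ge\|\widetilde x^{k+1}-x^{k+1}\|_{\frac12\Sigma_{f_1}-\widehat{\Sigma}_{f_1}}^2$ and the $h_1$-analogue, which are precisely the $\tfrac12\Sigma_{f_1}-\widehat{\Sigma}_{f_1}$ (resp. $\tfrac12\Sigma_{h_1}-\widehat{\Sigma}_{h_1}$) parts of $\theta_k$ (\ref{thetak}), the $\widehat{\Sigma}$-weighted surplus being exactly $\xi_k$ (\ref{xik}) arising from the upper majorization bound. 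For the proximal terms I would use identity (\ref{ident2}) together with the relaxation relation $\widetilde x^{k+1}-\widetilde x^{k}=\rho(x^k-\widetilde x^k)$ read from (\ref{omegak}): a direct computation gives $\tfrac1\rho(\|\widetilde x_e^{k+1}\|_{\widehat{\Sigma}_{f_1}+\S}^2-\|\widetilde x_e^{k+2}\|_{\widehat{\Sigma}_{f_1}+\S}^2)=-2\langle(\widehat{\Sigma}_{f_1}+\S)(x^{k+1}-\widetilde x^{k+1}),x_e^{k+1}\rangle+(2-\rho)\|\widetilde x^{k+1}-x^{k+1}\|_{\widehat{\Sigma}_{f_1}+\S}^2$, so twice the proximal term reproduces the telescope of the $\tfrac1\rho\|\widetilde x_e^{k+1}\|^2$ block of $\Psi$ plus the $(2-\rho)(\widehat{\Sigma}_{f_1}+\S)$ contribution of $\theta_k$ (and the $y$-analogue). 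Finally, for $\eta_k$ I would use the fact that the sum of the left-hand sides of (\ref{ident3}) and (\ref{ident5}) equals $\eta_k+\sigma(\rho-1)\|\A^*x_e^{k+1}\|^2$; substituting the right-hand sides then produces the telescopes of the $\tfrac1{\sigma\rho}\|z_e+\sigma(\rho-1)\A^*x_e\|^2$ and $\sigma(2-\rho)\|\A^*x_e\|^2$ blocks of $\Psi$ and, via $\A^*x_e^k+\B^*y_e^k=\A^*x^k+\B^*y^k-c$, the residual $\sigma(2-\rho)\|\A^*x^k+\B^*y^k-c\|^2$.

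Multiplying the reduced inequality $\eta_k+(\text{proximal})\le(\text{gradient})$ by two and collecting, the $z$- and $\A^*x$-telescopes, the two $\tfrac1\rho$-telescopes, the $(2-\rho)$-weighted $\widehat{\Sigma}_{f_1}+\S$ and $\widehat{\Sigma}_{h_1}+\T$ terms, the $\tfrac12\Sigma-\widehat{\Sigma}$ terms, and the residual assemble into $\Psi_k-\Psi_{k+1}\ge\theta_k+\sigma(2-\rho)\|\A^*x^k+\B^*y^k-c\|^2$. The hard part is this simultaneous bookkeeping: because the governing iterate is the relaxed point $\widetilde\omega^{k+1}$ rather than $\omega^k$, each of the three groups carries mixed-index quantities ($x^k,\widetilde x^k,\widetilde x^{k+1}$ and the pervasive $(\rho-1)$ and $(2-\rho)$ factors), and one must check that the correction $\sigma(\rho-1)\|\A^*x_e^{k+1}\|^2$ from $\eta_k$ recombines with the $\tfrac{\sigma\rho}{2}\|\A^*x_e^{k+1}\|^2$ from (\ref{ident5}) to produce exactly the $\sigma(2-\rho)\|\A^*x_e^{k+1}\|^2$ half of the $\A^*x$-telescope, that the $\|\A^*x_e^{k+1}+\B^*y_e^k\|^2$ cross terms in (\ref{ident3}) and (\ref{ident5}) cancel, and that the gradient estimate delivers the precise operator $\tfrac12\Sigma_{f_1}-\widehat{\Sigma}_{f_1}$ rather than a looser bound. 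Verifying these cancellations together, keeping the factor of two consistent throughout, is where the argument is most delicate.
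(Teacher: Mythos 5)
Your proposal is correct and follows essentially the same route as the paper's own proof: subproblem optimality conditions combined with the KKT inequality (\ref{covineq}) to isolate $\eta_k$ and the proximal inner products, Lemmas \ref{lem3}--\ref{lem4} to telescope $\eta_k$ into the $z$- and $\A^*x$-blocks of $\Psi_k$ (your cancellation checks, including $\tfrac{\sigma\rho}{2}-\sigma(\rho-1)=\tfrac{\sigma(2-\rho)}{2}$, match the paper's (\ref{etakconv})), and identity (\ref{ident2}) with the relaxation step (\ref{omegak}) to telescope the proximal terms exactly as in (\ref{xekconv})--(\ref{yekconv}). The only difference is cosmetic: you bound the gradient differences $\langle\nabla f_1(\bar x)-\nabla f_1(\widetilde x^{k+1}),x_e^{k+1}\rangle$ directly via three applications of (\ref{f1cov}) and (\ref{noneq}), whereas the paper carries the function values of $f_1$ and $h_1$ explicitly through (\ref{ineq1})--(\ref{ineq8}) before cancelling them; both orderings yield the identical weight $\tfrac12\Sigma_{f_1}-\widehat{\Sigma}_{f_1}$ appearing in $\theta_k$.
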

\begin{proof}
	From the left inequality of (\ref{f1cov}), let $x':=\widetilde{x}^{k+1}$, we have
	\begin{equation}\label{ineqcovl}
		f_1(x) \ge f_1(\widetilde{x}^{k+1}) + \langle x-\widetilde{x}^{k+1},\nabla f_1(\widetilde{x}^{k+1}) \rangle + \displaystyle{\frac{1}{2}}\|x-\widetilde{x}^{k+1}\|_{\Sigma_{f_1}}^2,
	\end{equation}
	and then from the right inequality of (\ref{f1cov}), let $x:=x^{k+1}$, $x':=\widetilde{x}^{k+1}$, we have
	\begin{equation}\label{ineqcovr}
		f_1(x^{k+1}) \le f_1(\widetilde{x}^{k+1}) + \langle x^{k+1}-\widetilde{x}^{k+1},\nabla f_1(\widetilde{x}^{k+1}) \rangle + \displaystyle{\frac{1}{2}}\|x^{k+1}-\widetilde{x}^{k+1}\|_{\widehat{\Sigma}_{f_1}}^2.
	\end{equation}
Subtracting both sides of (\ref{ineqcovl}) and (\ref{ineqcovr}), we obtain
\begin{equation}\label{ineq1}
	f_1(x) - f_1(x^{k+1}) - \langle x-x^{k+1},\nabla f_1(\widetilde{x}^{k+1}) \rangle \ge \displaystyle{\frac{1}{2}}\|x - \widetilde{x}^{k+1}\|_{\Sigma_{f_1}}^2 - \displaystyle{\frac{1}{2}}\|\widetilde{x}^{k+1} - x^{k+1}\|_{\widehat{\Sigma}_{f_1}}^2.
\end{equation}
Using the necessary optimality conditions of $x^{k+1}$ in (\ref{mipgadmm}), we have for any $x\in\mathbb{X}$ and $\xi\in\partial f_2(x^{k+1})$ that
\begin{equation}\label{varineq}
	\Big\langle x - x^{k+1}, \xi + \F x^{k+1} + \nabla f_1(\widetilde{x}^{k+1}) + \sigma \A(\A^*\widetilde{x}^{k+1} + \B^*\widetilde{y}^{k+1} - c + \sigma^{-1}\widetilde{z}^{k+1}) - \F\widetilde{x}^{k+1} \Big\rangle \ge 0.
\end{equation}
From the convexity of $f_2(x)$, we get
\begin{equation}\label{convf2}
	\langle x - x^{k+1}, \xi\rangle \le f_2(x) - f_2(x^{k+1}).
\end{equation}
Then, substituting (\ref{convf2}) into (\ref{varineq}) and using the relaxation step (\ref{omegak}), it yields
\begin{align}\label{ineq2}
		&f_2(x) - f_2(x^{k+1}) + \Big\langle x - x^{k+1}, \nabla f_1(\widetilde{x}^{k+1}) + \sigma \A(\A^*\widetilde{x}^{k+1} + \B^*\widetilde{y}^{k+1} - c + \sigma^{-1}\widetilde{z}^{k+1}) + \F(x^{k+1} - \widetilde{x}^{k+1})\Big\rangle\nonumber\\[3mm]
		=& f_2(x) - f_2(x^{k+1}) + \Big\langle x - x^{k+1}, \nabla f_1(\widetilde{x}^{k+1}) + \A z^{k+1} - (\widehat{\Sigma}_{f_1}+\S)(\widetilde{x}^{k+1} - x^{k+1})\Big\rangle \ge 0.
\end{align}
Combining (\ref{ineq1}) and (\ref{ineq2}), we obtain
\begin{equation}\label{ineq3}
	\begin{array}{l}
		f_1(x) - f_1(x^{k+1}) + f_2(x) - f_2(x^{k+1}) + \langle x - x^{k+1}, \A z^{k+1} - (\widehat{\Sigma}_{f_1}+\S)(\widetilde{x}^{k+1} - x^{k+1})\rangle\\[3mm]
		\ge \frac{1}{2}\|x - \widetilde{x}^{k+1}\|_{\Sigma_{f_1}}^2 - \frac{1}{2}\|\widetilde{x}^{k+1} - x^{k+1}\|_{\widehat{\Sigma}_{f_1}}^2.
	\end{array}
\end{equation}
Similarly, for any $y\in\mathbb{Y}$, we have
\begin{equation}\label{ineq4}
	\begin{array}{l}
		h_1(y) - h_1(y^{k}) + h_2(y) - h_2(y^{k}) + \Big\langle y - y^{k}, \sigma \B(\A^*x^{k} + \B^*y^{k} - c + \sigma^{-1}z^{k}) - (\hat{\Sigma}_{h_1}+\T)(\widetilde{y}^{k} - y^{k})\Big\rangle\\[3mm]
		\ge \frac{1}{2}\|y - \widetilde{y}^{k}\|_{\Sigma_{h_1}}^2 - \frac{1}{2}\|\widetilde{y}^{k} - y^{k}\|_{\widehat{\Sigma}_{h_1}}^2.
	\end{array}
\end{equation}
Let $x=\bar{x}$ and $y=\bar{y}$, and combine (\ref{ineq3}) with (\ref{ineq4}), we obtain
\begin{equation}\label{ineq5}
	\begin{array}{l}
		f_1(\bar{x}) - f_1(x^{k+1}) + h_1(\bar{y}) - h_1(y^{k}) + f_2(\bar{x}) - f_2(x^{k+1}) + h_2(\bar{y}) - h_2(y^{k})\\[3mm]
		 - \Big\langle x_e^{k+1}, \A z^{k+1} - (\hat{\Sigma}_{f_1}+\S)(\widetilde{x}^{k+1} - x^{k+1})\Big\rangle - \Big\langle y_e^{k}, \sigma \B(\A^*x^{k} + \B^*y^{k} - c + \sigma^{-1}z^{k}) - (\widehat{\Sigma}_{h_1}+\T)(\widetilde{y}^{k} - y^{k})\Big\rangle\\[3mm]
		\ge \displaystyle{\frac{1}{2}}\|\widetilde{x}_e^{k+1}\|_{\Sigma_{f_1}}^2 - \displaystyle{\frac{1}{2}}\|\widetilde{x}^{k+1} - x^{k+1}\|_{\widehat{\Sigma}_{f_1}}^2 + \displaystyle{\frac{1}{2}}\|\widetilde{y}_e^{k}\|_{\Sigma_{h_1}}^2 - \displaystyle{\frac{1}{2}}\|\widetilde{y}^{k} - y^{k}\|_{\widehat{\Sigma}_{h_1}}^2.
	\end{array}
\end{equation}
From (\ref{f1cov}) and (\ref{h1cov}), we can easily get
\begin{equation}\label{ineq6}
	f_1(x^{k+1}) - f_1(\bar{x}) \ge \langle x_e^{k+1},\nabla f_1(\bar{x}) \rangle + \displaystyle{\frac{1}{2}}\|x_e^{k+1}\|_{\Sigma_{f_1}}^2,
\end{equation}
and
\begin{equation}\label{ineq7}
	h_1(y^{k}) - h_1(\bar{y}) \ge \langle y_e^{k},\nabla h_1(\bar{y}) \rangle + \displaystyle{\frac{1}{2}}\|y_e^{k}\|_{\Sigma_{h_1}}^2.
\end{equation}
According to (\ref{ineq5}), (\ref{ineq6}), (\ref{ineq7}) and the inequality (\ref{noneq}), we obtain
\begin{align}\label{ineq8}
		&f_2(\bar{x}) - f_2(x^{k+1}) + h_2(\bar{y}) - h_2(y^{k}) - \langle x_e^{k+1}, \nabla f_1(\bar{x}) + A\bar{z}\rangle - \langle y_e^{k}, \nabla h_1(\bar{y}) + \B\bar{z}\rangle - \langle \A^*x_e^{k+1}, z_e^{k+1} \rangle\nonumber\\[3mm]
		&- \Big\langle \B^*y_e^k, z_e^k + \sigma(\A^*x^k + \B^*y^k - c)\Big\rangle + \Big\langle x_e^{k+1}, (\hat{\Sigma}_{f_1}+\S)(\widetilde{x}^{k+1} - x^{k+1})\Big\rangle + \Big\langle y_e^{k}, (\hat{\Sigma}_{h_1}+\T)(\widetilde{y}^{k} - y^{k})\Big\rangle\nonumber\\[3mm]
		\ge& \frac{1}{2}\|\widetilde{x}_e^{k+1}\|_{\Sigma_{f_1}}^2 - \frac{1}{2}\|\widetilde{x}^{k+1} - x^{k+1}\|_{\hat{\Sigma}_{f_1}}^2 + \displaystyle{\frac{1}{2}}\|\widetilde{y}_e^{k}\|_{\Sigma_{h_1}}^2 - \frac{1}{2}\|\widetilde{y}^{k} - y^{k}\|_{\hat{\Sigma}_{h_1}}^2 + \frac{1}{2}\|x_e^{k+1}\|_{\Sigma_{f_1}}^2 + \displaystyle{\frac{1}{2}}\|y_e^{k}\|_{\Sigma_{h_1}}^2\nonumber\\[3mm]
		\ge& \displaystyle{\frac{1}{4}}\|\widetilde{x}^{k+1} - x^{k+1}\|_{\Sigma_{f_1}}^2 + \displaystyle{\frac{1}{4}}\|\widetilde{y}^{k} - y^{k}\|_{\Sigma_{h_1}}^2 - \displaystyle{\frac{1}{2}}\|\widetilde{x}^{k+1} - x^{k+1}\|_{\widehat{\Sigma}_{f_1}}^2 - \displaystyle{\frac{1}{2}}\|\widetilde{y}^{k} - y^{k}\|_{\widehat{\Sigma}_{h_1}}^2.
\end{align}
Replace $x$ with $x^{k+1}$, and $y$ with $y^k$ in the inequality (\ref{covineq}), we get
$$
f_2(\bar{x}) - f_2(x^{k+1}) + h_2(\bar{y}) - h_2(y^{k}) - \langle x_e^{k+1}, \nabla f_1(\bar{x}) + A\bar{z}\rangle - \langle y_e^{k}, \nabla h_1(\bar{y}) + B\bar{z}\rangle \le 0.
$$
Substituting this inequality and (\ref{etak}) into (\ref{ineq8}), it yields
\begin{align}\label{ineq9}
		&-\eta^k + \Big\langle x_e^{k+1}, (\hat{\Sigma}_{f_1}+\S)(\widetilde{x}^{k+1} - x^{k+1})\Big\rangle + \Big\langle y_e^{k}, (\hat{\Sigma}_{h_1}+\T)(\widetilde{y}^{k} - y^{k})\Big\rangle\nonumber\\[3mm]
		\ge& \frac{1}{4}\|\widetilde{x}^{k+1} - x^{k+1}\|_{\Sigma_{f_1}}^2 + \frac{1}{4}\|\widetilde{y}^{k} - y^{k}\|_{\Sigma_{h_1}}^2 - \frac{1}{2}\|\widetilde{x}^{k+1} - x^{k+1}\|_{\widehat{\Sigma}_{f_1}}^2 - \frac{1}{2}\|\widetilde{y}^{k} - y^{k}\|_{\widehat{\Sigma}_{h_1}}^2.
\end{align}
Using Lemmas \ref{lem3} and \ref{lem4}, we have
\begin{equation}\label{etakconv}
	\begin{array}{l}
		\eta^k = \Big\langle \A^*x_e^{k+1}, z_e^{k+1} \Big\rangle + \Big\langle \B^*y_e^k, z_e^k + \sigma(\A^*x^k + \B^*y^k - c)\Big\rangle\\[3mm]
		\quad = \Big\langle \A^*x_e^{k+1} + \B^*y_e^k, z_e^{k+1} + \sigma(\rho - 1)\A^*x_e^{k+1}\Big\rangle - \Big\langle \B^*y_e^k, z_e^{k+1} + \sigma(\rho - 1)A^*x_e^{k+1}\Big\rangle\\[3mm]
		\qquad - \Big\langle \A^*x_e^{k+1}, \sigma(\rho - 1)\A^*x_e^{k+1}\Big\rangle + \Big\langle \B^*y_e^k, z_e^k + \sigma(\A^*x^k + \B^*y^k - c)\Big\rangle\\[3mm]
		\quad = \Big\langle \A^*x_e^{k+1} + \B^*y_e^k, z_e^{k+1} + \sigma(\rho - 1)\A^*x_e^{k+1}\Big\rangle + \Big\langle \B^*y_e^k, z_e^k + \sigma(\A^*x^k + \B^*y^k - c) - z_e^{k+1}\\[3mm]
		\qquad - \sigma(\rho - 1)\A^*x_e^{k+1}\Big\rangle - \sigma(\rho - 1)\|\A^*x_e^{k+1}\|^2\\[3mm]
		\quad = \displaystyle{\frac{1}{2\sigma\rho}}\Big(\|z_e^{k+1} + \sigma(\rho - 1)\A^*x_e^{k+1}\|^2 - \|z_e^{k} + \sigma(\rho - 1)\A^*x_e^{k}\|^2\Big)\\[3mm] 	\qquad+ \displaystyle{\frac{\sigma(2-\rho)}{2}}\Big(\|\A^*x^{k} + \B^*y^k - c\|^2
	 + \|\A^*x_e^{k+1}\|^2 - \|\A^*x_e^{k}\|^2\Big).\\[3mm]
	\end{array}
\end{equation}
From (\ref{omegak}) and (\ref{ident2}), we have
\begin{equation}\label{xekconv}
	\begin{array}{l}
		\Big\langle x_e^{k+1}, (\widehat{\Sigma}_{f_1}+\S)(\widetilde{x}^{k+1} - x^{k+1})\Big\rangle\\[3mm]
		= \displaystyle{\frac{1}{\rho}}\Big\langle x^{k+1} - \bar{x}, (\widehat{\Sigma}_{f_1}+\S)(\widetilde{x}^{k+1} - \widetilde{x}^{k+2})\Big\rangle\\[3mm]
		= \displaystyle{\frac{1}{2\rho}}\Big(\|\widetilde{x}_e^{k+1}\|_{\widehat{\Sigma}_{f_1}+\S}^2 - \|\widetilde{x}_e^{k+2}\|_{\widehat{\Sigma}_{f_1}+\S}^2 - \rho(2-\rho)\|x^{k+1} - \widetilde{x}^{k+1}\|_{\widehat{\Sigma}_{f_1}+\S}^2\Big).
	\end{array}
\end{equation}
In a similar way, we get
\begin{equation}\label{yekconv}
	\begin{array}{l}
		\Big\langle y_e^{k}, (\hat{\Sigma}_{h_1}+\T)(\widetilde{y}^{k} - y^{k})\Big\rangle\\[3mm]
		= \displaystyle{\frac{1}{2\rho}}\Big(\|\widetilde{y}_e^{k}\|_{\widehat{\Sigma}_{h_1}+\T}^2 - \|\widetilde{y}_e^{k+1}\|_{\widehat{\Sigma}_{h_1}+\T}^2 - \rho(2-\rho)\|y^{k} - \widetilde{y}^{k}\|_{\widehat{\Sigma}_{h_1}+\T}^2\Big).
	\end{array}
\end{equation}
Combining (\ref{ineq9}), (\ref{etakconv}), (\ref{xekconv}) with (\ref{yekconv}), we obtain
\begin{align}\label{ineqconv}
&\frac{1}{2\sigma\rho}\Big(\|z_e^{k} + \sigma(\rho - 1)\A^*x_e^{k}\|^2 - \|z_e^{k+1} + \sigma(\rho - 1)\A^*x_e^{k+1}\|^2\Big) - \frac{\sigma(2-\rho)}{2}\|\A^*x^{k} + \B^*y^k - c\|^2\nonumber\\[3mm]
		&+ \frac{\sigma(2-\rho)}{2}\Big(\|A^*x_e^{k}\|^2 - \|A^*x_e^{k+1}\|^2\Big) - \frac{2-\rho}{2}\|x^{k+1} - \widetilde{x}^{k+1}\|_{\widehat{\Sigma}_{f_1}+\S}^2 \nonumber\\[3mm]
		&+ \frac{1}{2\rho}\Big(\|\widetilde{x}_e^{k+1}\|_{\hat{\Sigma}_{f_1}+\S}^2- \|\widetilde{x}_e^{k+2}\|_{\widehat{\Sigma}_{f_1}+\S}^2\Big) - \frac{2-\rho}{2}\|y^{k} - \widetilde{y}^{k}\|_{\widehat{\Sigma}_{h_1}+\T}^2 + \frac{1}{2\rho}\Big(\|\widetilde{y}_e^{k}\|_{\hat{\Sigma}_{h_1}+\T}^2 - \|\widetilde{y}_e^{k+1}\|_{\widehat{\Sigma}_{h_1}+\T}^2\Big)\nonumber\\[3mm]
		\ge &
		\frac{1}{4}\|\widetilde{x}^{k+1} - x^{k+1}\|_{\Sigma_{f_1}}^2 + \frac{1}{4}\|\widetilde{y}^{k} - y^{k}\|_{\Sigma_{h_1}}^2 - \frac{1}{2}\|\widetilde{x}^{k+1} - x^{k+1}\|_{\widehat{\Sigma}_{f_1}}^2 - \frac{1}{2}\|\widetilde{y}^{k} - y^{k}\|_{\widehat{\Sigma}_{h_1}}^2.
\end{align}
Substitutiing (\ref{psik}) and (\ref{thetak}) into  (\ref{ineqconv}) and noting the definition of $\Psi_k$ in (\ref{psik}), we can get the conclusion (\ref{pivineq}) immediately.

\end{proof}

\newpage

\setcounter{equation}{0}
\section{Convergence analysis}\label{section4}
Based on the lemmas proved in the previous section,  we can establish the convergence of G-ADMM-M for solving problem (\ref{prob1}). For this purpose, we firstly give some useful lemmas.
\begin{lemma}\label{lem6}
Suppose that the Assumption (\ref{assum}) hold. Let the sequence $\{(x^k,y^k,z^k)\}$ be generated by Algorithm G-ADMM-M and $(\bar{x},\bar{y},\bar{z})\in \mathbb{W}^*$. Then, for any $\sigma > 0$, $\rho\in(0,2)$, and $k > 1$, we have
\begin{align}\label{conneq}
\sigma(2-\rho)\|\A^*x^{k} + \B^*y^k - c\|^2 \ge& \sigma(2-\rho)\|\A^*x^{k+1} + \B^*y^k - c\|^2 + \frac{(2-\rho)^2}{\rho}\|x^{k+1} - x^k\|_{\widehat{\Sigma}_{f_1}+\S+\sigma AA^*}\nonumber\\[3mm]
 &- (2-\rho)\|\widetilde{x}^k - x^k\|_{\widehat{\Sigma}_{f_1}+\S}^2.
 \end{align}
\end{lemma}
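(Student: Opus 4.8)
The plan is to derive (\ref{conneq}) from the first-order optimality conditions of the $x$-subproblem (\ref{mipgadmm}) at two consecutive iterations, combined with the multiplier recursion (\ref{lagmtran}) already obtained in the proof of Lemma \ref{lem3} and the relaxation step (\ref{omegak}). First I would put the $x$-optimality into a compact form. Since the linear term in (\ref{mipgadmm}) contains $\sigma\A(\A^*\widetilde{x}^{k}+\B^*\widetilde{y}^{k}-c+\sigma^{-1}\widetilde{z}^k)-\F\widetilde{x}^k$ and $z^{k}=\widetilde{z}^k+\sigma(\A^*x^{k}+\B^*\widetilde{y}^{k}-c)$, the $\sigma\A\A^*$ contribution cancels against the corresponding part of $\F=\widehat{\Sigma}_{f_1}+\S+\sigma\A\A^*$, so there is $\xi^{k}\in\partial f_2(x^{k})$ with
\begin{equation}\nonumber
\xi^{k}+\nabla f_1(\widetilde{x}^{k})+\A z^{k}=(\widehat{\Sigma}_{f_1}+\S)(\widetilde{x}^{k}-x^{k}),
\end{equation}
and the analogous identity at index $k+1$ (this is exactly the normal form already used to obtain (\ref{ineq2})).

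Next I would expand the left-hand side of (\ref{conneq}). Writing $\A^*x^{k}+\B^*y^k-c=(\A^*x^{k+1}+\B^*y^k-c)-\A^*(x^{k+1}-x^k)$ and expanding the square isolates $\|x^{k+1}-x^k\|_{\sigma\A\A^*}^2$ together with the cross term $-2\langle\sigma\A(\A^*x^{k+1}+\B^*y^k-c),\,x^{k+1}-x^k\rangle$. Using (\ref{lagmtran}) to substitute $\sigma\rho(\A^*x^{k+1}+\B^*y^k-c)=(z^{k+1}-z^k)+\sigma(\rho-1)\A^*(x^{k+1}-x^k)$ converts the cross term into a multiple of $\langle x^{k+1}-x^k,\A(z^{k+1}-z^k)\rangle$ plus a further $\sigma\A\A^*$ contribution, and a short computation shows the combined $\sigma\A\A^*$ coefficient equals $(2-\rho)/\rho$. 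Subtracting the two compact optimality identities then expresses $\A(z^{k+1}-z^k)$ through $(\widehat{\Sigma}_{f_1}+\S)\big[(\widetilde{x}^{k+1}-x^{k+1})-(\widetilde{x}^{k}-x^{k})\big]$, the subgradient difference $\xi^{k+1}-\xi^k$, and the gradient difference $\nabla f_1(\widetilde{x}^{k+1})-\nabla f_1(\widetilde{x}^{k})$. Monotonicity of $\partial f_2$ gives $\langle x^{k+1}-x^k,\xi^{k+1}-\xi^k\rangle\ge0$, so that contribution may be dropped in the direction favorable to a lower bound.

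Finally I would insert the relaxation identity $(\widetilde{x}^{k+1}-x^{k+1})-(\widetilde{x}^{k}-x^{k})=-\rho(\widetilde{x}^{k}-x^{k})-(x^{k+1}-x^k)$ from (\ref{omegak}), collect the quadratic terms, and complete the square as $\|(x^{k+1}-x^k)+(\widetilde{x}^{k}-x^{k})\|_{\widehat{\Sigma}_{f_1}+\S}^2\ge0$. Recombining $\|x^{k+1}-x^k\|_{\widehat{\Sigma}_{f_1}+\S}^2$ with $\|x^{k+1}-x^k\|_{\sigma\A\A^*}^2$ reconstitutes the $\F$-norm, and multiplying the whole estimate by $2-\rho>0$ yields exactly (\ref{conneq}), including the $-(2-\rho)\|\widetilde{x}^k-x^k\|_{\widehat{\Sigma}_{f_1}+\S}^2$ correction produced by the square.

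I expect the delicate point to be the gradient difference term $\langle x^{k+1}-x^k,\nabla f_1(\widetilde{x}^{k+1})-\nabla f_1(\widetilde{x}^{k})\rangle$, which is absent from (\ref{conneq}) and hence must be absorbed. Here I would use the majorization estimate (\ref{f1cov}) together with the relaxation relation $\widetilde{x}^{k+1}-\widetilde{x}^{k}=\rho(x^k-\widetilde{x}^k)$, the point being that the variation of $\nabla f_1$ across the two linearization points is controlled by the operator $\widehat{\Sigma}_{f_1}$ that already sits inside the completed square via $\widehat{\Sigma}_{f_1}+\S\succeq\widehat{\Sigma}_{f_1}$. The cleanest instance is the quadratic case, where $\nabla f_1(\widetilde{x}^{k+1})-\nabla f_1(\widetilde{x}^{k})=\widehat{\Sigma}_{f_1}(\widetilde{x}^{k+1}-\widetilde{x}^{k})$ holds exactly and the term is cancelled by the square through $\widehat{\Sigma}_{f_1}+\S-\widehat{\Sigma}_{f_1}=\S\succeq0$; the general non-quadratic case instead requires bounding this variation with (\ref{f1cov}), and this is the step in which the standing choice $\F\succ0$ and the majorant $\widehat{\Sigma}_{f_1}\succeq\Sigma_{f_1}$ must be exploited most carefully.
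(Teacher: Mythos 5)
Your setup is exactly the paper's: the compact optimality identities for the $x$-subproblem at consecutive iterations, the expansion of the left-hand side via (\ref{lagmtran}) to isolate $\langle z^{k+1}-z^k,\A^*(x^k-x^{k+1})\rangle$ with the $\sigma\A\A^*$ coefficient $(2-\rho)^2/\rho$, monotonicity of $\partial f_2$, the relaxation identity $(\widetilde{x}^{k+1}-x^{k+1})-(\widetilde{x}^{k}-x^{k})=-\rho(\widetilde{x}^{k}-x^{k})-(x^{k+1}-x^k)$, and a completion of the square in the $(\widehat{\Sigma}_{f_1}+\S)$-norm. All of these steps are correct and coincide with the paper's (\ref{eqcon})--(\ref{neqsubxzdef}). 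However, the step you yourself flag as delicate is a genuine gap, and your proposed fallback does not close it. After completing the square you are left with the term $\langle x^{k+1}-x^k,\nabla f_1(\widetilde{x}^{k+1})-\nabla f_1(\widetilde{x}^{k})\rangle$, which has no sign: the direction $x^{k+1}-x^k$ is unrelated to the increment $\widetilde{x}^{k+1}-\widetilde{x}^{k}$, so monotonicity of $\nabla f_1$ does not apply, and (\ref{f1cov}) controls Bregman-type differences of function values at a \emph{common} pair of points, not such cross inner products. Any attempt to bound this term by Cauchy--Schwarz or a Young-type inequality produces leftover positive multiples of $\|x^{k+1}-x^k\|_{\widehat{\Sigma}_{f_1}}^2$ and $\|\widetilde{x}^{k}-x^{k}\|_{\widehat{\Sigma}_{f_1}}^2$, and the target inequality (\ref{conneq}) has no slack to absorb them: in the correct argument all contributions of the gradient variation cancel \emph{exactly}, not up to an error.

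The missing idea is the paper's use of Clarke's mean-value theorem \cite[Proposition 2.6.5]{OPNSA}: since $\nabla f_1$ is globally Lipschitz, there exists a self-adjoint positive semidefinite operator $\Gamma^k$ (a generalized Hessian at a point on the segment between the linearization points) with $0\preceq\Gamma^k\preceq\widehat{\Sigma}_{f_1}$ such that
\begin{equation*}
\nabla f_1(\widetilde{x}^{k+1})-\nabla f_1(\widetilde{x}^{k})=\Gamma^k(\widetilde{x}^{k+1}-\widetilde{x}^{k})=-\rho\,\Gamma^k(\widetilde{x}^{k}-x^{k}).
\end{equation*}
This exact operator representation lets the gradient term merge with the proximal cross term $\rho\langle x^{k+1}-x^k,(\widehat{\Sigma}_{f_1}+\S)(\widetilde{x}^{k}-x^{k})\rangle$ into the single quadratic cross term $\rho\langle x^{k+1}-x^k,(\widehat{\Sigma}_{f_1}+\S-\Gamma^k)(\widetilde{x}^{k}-x^{k})\rangle$ \emph{before} you complete the square; since $\widehat{\Sigma}_{f_1}+\S-\Gamma^k\succeq 0$, your own completion-of-the-square step applied in this operator norm, followed by the monotone comparison $\|\cdot\|^2_{\widehat{\Sigma}_{f_1}+\S-\Gamma^k}\le\|\cdot\|^2_{\widehat{\Sigma}_{f_1}+\S}$ on the two negative terms, gives precisely
\begin{equation*}
\langle z^{k+1}-z^k,\A^*(x^k-x^{k+1})\rangle\;\ge\;\frac{2-\rho}{2}\|x^{k+1}-x^k\|_{\widehat{\Sigma}_{f_1}+\S}^2-\frac{\rho}{2}\|\widetilde{x}^{k}-x^{k}\|_{\widehat{\Sigma}_{f_1}+\S}^2,
\end{equation*}
which combined with your expansion yields (\ref{conneq}). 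Your remark that the quadratic case is clean is exactly right; the mean-value theorem is what reduces the general Lipschitz-gradient case to that clean situation, and without it (or an equivalent device) the proof as proposed does not go through.
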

\begin{proof}
Note that
\begin{align*}
&\sigma(2-\rho)\|\A^*x^{k} + \B^*y^k - c\|^2\\[3mm]
= &\sigma(2-\rho)\|A^*x^{k+1} + \B^*y^k - c + A^*(x^k - x^{k+1})\|^2\\[3mm]
= &\sigma(2-\rho)\|A^*x^{k+1} + \B^*y^k - c\|^2 + \sigma(2-\rho)\|A^*(x^k - x^{k+1})\|^2\\[3mm]
 &+ 2\sigma(2-\rho)\Big\langle \A^*x^{k+1} + \B^*y^k - c, \A^*(x^k - x^{k+1})\Big\rangle.
 \end{align*}
From (\ref{lagmtran}), we have
\begin{align*}
&2\sigma(2-\rho)\Big\langle \A^*x^{k+1} + \B^*y^k - c, A^*(x^k - x^{k+1})\Big\rangle\\[3mm]
=& \frac{2(2-\rho)}{\rho}\Big\langle z_e^{k+1} - z_e^k + \sigma(\rho-1)(\A^*x_e^{k+1} - \A^*x_e^k), \A^*(x^k - x^{k+1})\Big\rangle\\[3mm]
= &\frac{2(2-\rho)}{\rho}\Big\langle z_e^{k+1} - z_e^k, \A^*(x^k - x^{k+1})\Big\rangle - \frac{2\sigma(\rho-1)(2-\rho)}{\rho}\|\A^*(x^k - x^{k+1})\|^2.
\end{align*}
Then we have
\begin{align}\label{eqcon}
&\sigma(2-\rho)\|\A^*x^{k} + \B^*y^k - c\|^2\nonumber\\[3mm]
 = &\sigma(2-\rho)\|\A^*x^{k+1} + \B^*y^k - c\|^2 +\frac{(2-\rho)^2}{\rho}\|x^{k+1} - x^{k}\|_{\sigma \A\A^*}^2
+ \frac{2(2-\rho)}{\rho}\Big\langle z_e^{k+1} - z_e^k, \A^*(x^k - x^{k+1})\Big\rangle.
\end{align}
According to the first-order optimality condition of the x-subproblem in (\ref{mipgadmm}), we know that
\begin{equation}\label{foocx}
\begin{array}{l}
-\nabla f_1(\widetilde{x}^k) - (\widehat{\Sigma}_{f_1} + \S)(x^k - \widetilde{x}^k) - \A z^k \in \partial f_2(x^k),\\[3mm]
-\nabla f_1(\widetilde{x}^{k+1}) - (\widehat{\Sigma}_{f_1} + \S)(x^{k+1} - \widetilde{x}^{k+1}) - \A z^{k+1} \in \partial f_2(x^{k+1}).
 \end{array}
\end{equation}
Hence, from the convexity of $f_2(x)$, we have
\begin{equation}\nonumber
\Big\langle x^k - x^{k+1}, \nabla f_1(\widetilde{x}^{k+1}) - \nabla f_1(\widetilde{x}^k) + (\widehat{\Sigma}_{f_1} + \S)[(x^{k+1} - x^k) - (\widetilde{x}^{k+1} - \widetilde{x}^k)] + \A(z^{k+1} - z^k)\Big\rangle\ge 0.
\end{equation}
Then, it yields that
\begin{align}\label{neqsubxz}
&\langle z^{k+1} - z^k, \A^*(x^k - x^{k+1})\rangle\nonumber\\[3mm]
=&\langle \A(z^{k+1} - z^k), x^k - x^{k+1}\rangle\nonumber\\[3mm]
\ge& \Big\langle x^{k+1} - x^k, \nabla f_1(\widetilde{x}^{k+1}) - \nabla f_1(\widetilde{x}^k)\Big\rangle - \Big\langle x^{k+1} - x^k, (\widehat{\Sigma}_{f_1} + \S)(\widetilde{x}^{k+1} - \widetilde{x}^k)\Big\rangle\nonumber\\[3mm]
 &+ \|x^{k+1} - x^k\|_{\widehat{\Sigma}_{f_1} + \S}^2.
\end{align}
Noting that $\nabla f_1$ is assumed be
globally Lipschitz continuous, then, from Clarkes Mean-Value Theorem \cite[Proposition 2.6.5]{OPNSA}, there exists a self-adjoint and positive semidefinite linear operator $\Gamma^k\in\partial^2(f_1(\gamma^k))$ with $\gamma^k$ being a point at the segment between $x^k$ and $x^{k+1}$, such that
\begin{equation}\nonumber
\nabla f_1(\widetilde{x}^{k+1}) - \nabla f_1(\widetilde{x}^k) = \Gamma^k(\widetilde{x}^{k+1} - \widetilde{x}^{k}).
\end{equation}
Then, according to (\ref{omegak}) and the left equality in (\ref{ident1}), the inequality (\ref{neqsubxz}) can be reorganized as
\begin{equation}\label{neqsubxzd}
\begin{array}{l}
\Big\langle z^{k+1} - z^k, \A^*(x^k - x^{k+1})\Big\rangle\\[3mm]
\ge \Big\langle x^{k+1} - x^k, (\widehat{\Sigma}_{f_1} + \S - \Gamma^k)(\widetilde{x}^{k} - \widetilde{x}^{k+1})\Big\rangle + \|x^{k+1} - x^k\|_{\widehat{\Sigma}_{f_1} + \S}^2\\[3mm]
 = \rho \Big\langle x^{k+1} - x^k, (\widehat{\Sigma}_{f_1} + \S - \Gamma^k)(\widetilde{x}^{k} - x^{k})\Big\rangle + \|x^{k+1} - x^k\|_{\widehat{\Sigma}_{f_1} + \S}^2\\[3mm]
 = \displaystyle{\frac{\rho}{2}}\Big[\|x^{k+1} - x^k\|_{\widehat{\Sigma}_{f_1} + \S - \Gamma^k}^2 + \|\widetilde{x}^k - x^k\|_{\widehat{\Sigma}_{f_1} + \S - \Gamma^k}^2 - \|\widetilde{x}^k - x^{k+1}\|_{\widehat{\Sigma}_{f_1} + \S - \Gamma^k}^2\Big] + \|x^{k+1} - x^k\|_{\widehat{\Sigma}_{f_1} + \S}^2.
 \end{array}
\end{equation}
Because $\widehat{\Sigma}_{f_1}\succeq \Gamma^k\succeq 0$ and $\frac{1}{2}\widehat{\Sigma}_{f_1} + \S\succeq 0$, we have
$$
\widehat{\Sigma}_{f_1} + \S - \frac{1}{2}\Gamma^k =  \frac{1}{2}\widehat{\Sigma}_{f_1} + \S + \frac{1}{2}(\widehat{\Sigma}_{f_1} - \Gamma^k)\succeq 0.
$$
Then, according to (\ref{noneq}), the inequality (\ref{neqsubxzd}) can be rewritten as
\begin{equation}\label{neqsubxzdef}
\begin{array}{l}
\Big\langle z^{k+1} - z^k, A^*(x^k - x^{k+1})\Big\rangle\\[3mm]
\ge \displaystyle{\frac{\rho}{2}}\Big[\|x^{k+1} - x^k\|_{\hat{\Sigma}_{f_1} + \S - \Gamma^k}^2 + \|\widetilde{x}^k - x^k\|_{\widehat{\Sigma}_{f_1} + \S - \Gamma^k}^2 - \|\widetilde{x}^k - x^{k+1}\|_{\hat{\Sigma}_{f_1} + \S - \displaystyle{\frac{1}{2}}\Gamma^k}^2\Big]\\[3mm]
\quad + \|x^{k+1} - x^k\|_{\widehat{\Sigma}_{f_1} + \S}^2\\[3mm]
\ge \displaystyle{\frac{\rho}{2}}\Big[\|x^{k+1} - x^k\|_{\hat{\Sigma}_{f_1} + \S - \Gamma^k}^2 + \|\widetilde{x}^k - x^k\|_{\widehat{\Sigma}_{f_1} + \S - \Gamma^k}^2 - 2\|x^{k+1} - x^{k}\|_{\hat{\Sigma}_{f_1} + \S - \frac{1}{2}\Gamma^k}^2\\[3mm]
\quad - 2\|\widetilde{x}^k - x^{k}\|_{\hat{\Sigma}_{f_1} + \S - \frac{1}{2}\Gamma^k}^2\Big] + \|x^{k+1} - x^k\|_{\widehat{\Sigma}_{f_1} + \S}^2\\[3mm]
= \displaystyle{\frac{2-\rho}{2}}\|x^{k+1} - x^k\|_{\widehat{\Sigma}_{f_1} + \S}^2 - \displaystyle{\frac{\rho}{2}}\|\widetilde{x}^k - x^{k}\|_{\widehat{\Sigma}_{f_1} + \S}^2.
 \end{array}
\end{equation}
Substituting the inequality (\ref{neqsubxzdef}) into (\ref{eqcon}), it is easy to see that our conclusion (\ref{conneq}) holds.
\end{proof}

In light of above lemmas, we are ready to establish the convergence of the G-ADMM-M algorithm under some technical conditions to solve
the problem (\ref{prob1}).

\begin{theorem}\label{convmipgadmm}
Suppose that the Assumption (\ref{assum}) holds. Let the sequence $\{(x^k,y^k,z^k)\}$ be generated by Algorithm G-ADMM-M, and that $(\bar{x},\bar{y},\bar{z})\in \mathbb{W}^*$ is a solution of problem (\ref{prob1}). Let $\Psi_k$, $\delta_k$, $\theta_k$ and $\xi_k$ are defined by (\ref{psik}), (\ref{deltak}), (\ref{thetak}) and (\ref{xik}), respectively. For any $\sigma > 0$, $\rho\in(0,2)$, $\lambda\in(0,1]$, and $k > 1$, it holds that
\begin{equation}\label{convneq}
\begin{array}{l}
\Big(\Psi_k(\bar{x},\bar{y},\bar{z}) + (2-\rho)\|\widetilde{x}^k - x^{k}\|_{\widehat{\Sigma}_{f_1} + \S}^2 +  \sigma(1-\lambda)(2-\rho)\|\A^*x^{k} + \B^*y^{k-1} - c\|^2\Big)\\[3mm]
 - \Big(\Psi_{k+1}(\bar{x},\bar{y},\bar{z}) + (2-\rho)\|\widetilde{x}^{k+1} - x^{k+1}\|_{\widehat{\Sigma}_{f_1} + \S}^2 +  \sigma(1-\lambda)(2-\rho)\|\A^*x^{k+1} + \B^*y^{k} - c\|^2\Big)\\[3mm]
 \ge \delta_k - \xi_k.
 \end{array}
\end{equation}
Moreover, if $\lambda\in(\frac{1}{2},1]$, we also assume that
\begin{equation}\label{pdefneq}
\widehat{\Sigma}_{h_1} + \T \succ 0, \quad \F \succ 0, \quad \H \succ 0,
\end{equation}
\begin{equation}\label{sumxik}
\sum_{k=1}^{\infty}\xi_k < +\infty,
\end{equation}
Then the sequence $\{(x^k,y^k)\}$ converges to an optimal solution of the primal problem (\ref{prob1})) and that $\{z^k\}$ converges to an optimal solution of the corresponding dual problem.
\end{theorem}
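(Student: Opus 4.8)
The proof naturally divides into the one-step estimate (\ref{convneq}) and the subsequent asymptotic analysis. To obtain (\ref{convneq}), I would start from the inequality (\ref{pivineq}) of Lemma \ref{lem5} and insert the lower bound (\ref{conneq}) of Lemma \ref{lem6} in place of the feasibility term $\sigma(2-\rho)\|\A^*x^k+\B^*y^k-c\|^2$ that sits on its right-hand side. It is then convenient to work with the augmented merit function $\Phi_k:=\Psi_k(\bar x,\bar y,\bar z)+(2-\rho)\|\widetilde x^k-x^k\|_{\widehat{\Sigma}_{f_1}+\S}^2+\sigma(1-\lambda)(2-\rho)\|\A^*x^k+\B^*y^{k-1}-c\|^2$: telescoping its second summand cancels precisely the term $-(2-\rho)\|\widetilde x^k-x^k\|_{\widehat{\Sigma}_{f_1}+\S}^2$ generated by Lemma \ref{lem6}, and subtracting the index-$(k+1)$ copy converts the block $(2-\rho)(\widehat{\Sigma}_{f_1}+\S)$ hidden in $\theta_k$ into exactly the weights $\tfrac12\Sigma_{f_1}-\widehat{\Sigma}_{f_1}$ and $\tfrac12\Sigma_{h_1}-\widehat{\Sigma}_{h_1}+(2-\rho)(\widehat{\Sigma}_{h_1}+\T)$ carried by $\delta_k-\xi_k$. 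What remains is to show that the surplus feasibility contributions dominate the $\lambda$-weighted residuals of $\delta_k$; here I would split $\sigma(2-\rho)\|\A^*x^{k+1}+\B^*y^k-c\|^2$ through the weight $\lambda$, recombine with the carried feasibility term at index $k-1$, and use the elementary estimates (\ref{noneq}) and (\ref{ident1}) of Lemma \ref{salop1} to liberate the cross term $\tfrac{\sigma(1-\lambda)(2-\rho)}{2}\|\B^*(y^k-y^{k-1})\|^2$, the $\A^*$-part being absorbed into the $\|x^{k+1}-x^k\|_\F^2$ budget supplied by Lemma \ref{lem6}.

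For the convergence assertion I would first observe that, once $\lambda\in(\tfrac12,1]$ and (\ref{pdefneq}) are in force, every coefficient operator in $\delta_k$ is positive semidefinite, so $\delta_k\ge0$, while each summand of $\Phi_k$ is nonnegative. Telescoping (\ref{convneq}) from $1$ to $K$ gives $\Phi_1-\Phi_{K+1}\ge\sum_{k=1}^K\delta_k-\sum_{k=1}^K\xi_k$; since $\Phi_{K+1}\ge0$ and $\sum_k\xi_k<\infty$ by (\ref{sumxik}), the sequence $\{\Phi_k\}$ stays bounded and $\sum_k\delta_k<\infty$, whence $\delta_k\to0$. Boundedness of $\Phi_k$ controls $\|z_e^k+\sigma(\rho-1)\A^*x_e^k\|$ and $\|\A^*x_e^k\|$, hence $\{z^k\}$; it also bounds $\widetilde y_e^k$ in the $\widehat{\Sigma}_{h_1}+\T$ norm, so using $\widehat{\Sigma}_{h_1}+\T\succ0$ together with $\delta_k\to0$ bounds $\{y^k\}$; and combining the $\A\A^*$- and $(\widehat{\Sigma}_{f_1}+\S)$-norm bounds on $x_e^k$ through $\F\succ0$ bounds $\{x^k\}$.

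From $\delta_k\to0$ and the definiteness hypotheses I would extract $\|x^{k+1}-x^k\|_\F\to0$, $\|\widetilde y^k-y^k\|\to0$ and $\|\A^*x^{k+1}+\B^*y^k-c\|\to0$; feeding $\|x^{k+1}-x^k\|\to0$ into the relaxation recursion $\widetilde x^{k+1}-x^{k+1}=(1-\rho)(\widetilde x^k-x^k)+(x^k-x^{k+1})$ with $|1-\rho|<1$ then forces $\|\widetilde x^k-x^k\|\to0$ as well. Picking a convergent subsequence $(x^{k_j},y^{k_j},z^{k_j})\to(x^\infty,y^\infty,z^\infty)$, the first-order conditions (\ref{foocx}) and their $y$-counterpart, the continuity of $\nabla f_1,\nabla h_1$, and the closedness of $\partial f_2,\partial h_2$ show in the limit that $(x^\infty,y^\infty,z^\infty)$ satisfies the KKT system (\ref{kktcond}), so it belongs to $\mathbb{W}^*$. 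Taking this limit point as the reference in $\Phi_k$, inequality (\ref{convneq}) yields $\Phi_{k+1}\le\Phi_k+\xi_k$, so $\Phi_k+\sum_{j\ge k}\xi_j$ is nonincreasing and bounded below and $\lim_k\Phi_k$ exists; since $\Phi_{k_j}\to0$ along the subsequence this limit is $0$, and $\Phi_k\to0$ finally forces $z^k\to z^\infty$, $y^k\to y^\infty$, and, via $\F\succ0$, $x^k\to x^\infty$, establishing convergence of the whole sequence.

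The step I expect to cause the most trouble is the passage from the raw sum of Lemmas \ref{lem5} and \ref{lem6} to the tidy form (\ref{convneq}): the feasibility quantities at the three indices $k-1,k,k+1$ must be bookkept so that the convex parameter $\lambda$ simultaneously leaves the nonnegative residual $\sigma(2\lambda-1)(2-\rho)\|\A^*x^{k+1}+\B^*y^k-c\|^2$ and generates exactly the $\B^*(y^k-y^{k-1})$ cross term, which is delicate precisely because the generalized scheme builds the new iterate from the relaxed point $\widetilde\omega^{k+1}$ rather than from $\omega^k$. In the asymptotic part, the nonroutine point is upgrading subsequential convergence to convergence of the full sequence, for which the existence of $\lim_k\Phi_k$ in spite of the indefinite sign of $\delta_k-\xi_k$, together with its identification as $0$, is the crucial ingredient.
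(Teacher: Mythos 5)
Your proposal follows essentially the same route as the paper's proof: the one-step estimate (\ref{convneq}) is obtained exactly as in the paper by combining Lemma \ref{lem5} with the $\lambda$-split of the feasibility term, using Lemma \ref{lem6} for the $\lambda$-part and the elementary expansion yielding $\tfrac12\|\B^*(y^k-y^{k-1})\|^2$ for the $(1-\lambda)$-part, and the asymptotic analysis proceeds identically (boundedness of the augmented merit function, $\sum_k\delta_k<+\infty$, extraction of a KKT cluster point via the first-order conditions, and re-centering the merit function at that cluster point to upgrade to whole-sequence convergence). The only refinement beyond the paper is your use of the relaxation recursion $\widetilde{x}^{k+1}-x^{k+1}=(1-\rho)(\widetilde{x}^k-x^k)+(x^k-x^{k+1})$ with $|1-\rho|<1$ to force $\|\widetilde{x}^k-x^k\|\to 0$, which is a welcome detail since the paper reads this limit off the term $(1-\lambda)(2-\rho)\|\widetilde{x}^k-x^k\|^2_{\widehat{\Sigma}_{f_1}+\S}$ in $\delta_k$, whose coefficient vanishes in the admissible case $\lambda=1$.
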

\begin{proof}
At the beginning, it holds that
\begin{equation}\label{constieq}
\begin{array}{rl}
&\|\A^*x^{k} + \B^*y^{k} - c\|^2\\[3mm]
= &\|\A^*x^{k} + \B^*y^{k-1} - c + \B^*(y^k - y^{k-1})\|^2\\[3mm]
=&\|\A^*x^{k} + \B^*y^{k-1} - c\|^2 + \|\B^*(y^k - y^{k-1})\|^2 + 2 \Big\langle \A^*x^{k} + \B^*y^{k-1} - c, \B^*(y^k - y^{k-1})\Big\rangle\\[3mm]
\ge& - \|\A^*x^{k} + \B^*y^{k-1} - c\|^2 + \frac{1}{2}\|\B^*(y^k - y^{k-1})\|^2.
 \end{array}
\end{equation}
According to (\ref{noneq}),(\ref{conneq}) and the inequality (\ref{constieq}) above, we have, for any $\sigma > 0$, $\rho\in(0,2)$, $\lambda\in(0,1]$, and $k > 1$, that
\begin{align}\label{thetaconstieq}
&\theta_k + \sigma(2-\rho)\|\A^*x^{k} + \B^*y^{k} - c\|^2\nonumber\\[3mm]
= &\theta_k + (1-\lambda)\sigma(2-\rho)\|\A^*x^{k} + B^*y^{k} - c\|^2 + \lambda\sigma(2-\rho)\|\A^*x^{k} + \B^*y^{k} - c\|^2\nonumber\\[3mm]
\ge& \theta_k + \lambda\Big(\sigma(2-\rho)\|\A^*x^{k+1} + \B^*y^k - c\|^2 + \frac{(2-\rho)^2}{\rho}\|x^{k+1} - x^k\|_{\widehat{\Sigma}_{f_1}+\S+\sigma AA^*}\nonumber\\[3mm]
 &- (2-\rho)\|\widetilde{x}^k - x^k\|_{\widehat{\Sigma}_{f_1}+\S}^2\Big) - (1-\lambda)\sigma(2-\rho)\Big(\|\A^*x^{k} + \B^*y^{k-1} - c\|^2 - \frac{1}{2}\|\B^*(y^k - y^{k-1})\|^2\Big)\nonumber\\[3mm]
=& \|\widetilde{x}^{k+1} - x^{k+1}\|_{\frac{1}{2}\Sigma_{f_1}-\widehat{\Sigma}_{f_1}+(2-\rho)(\widehat{\Sigma}_{f_1}+\S)}^2 + \|\widetilde{y}^{k} - y^{k}\|_{\frac{1}{2}\Sigma_{h_1}-\widehat{\Sigma}_{h_1}+(2-\rho)(\widehat{\Sigma}_{h_1}+\T)}^2\nonumber\\[3mm]
&+ (1-\lambda)\sigma(2-\rho)\Big(\|\A^*x^{k+1} + \B^*y^k - c\|^2 - \|\A^*x^{k} + \B^*y^{k-1} - c\|^2\Big) + (2\lambda-1)\sigma(2-\rho)\|\A^*x^{k+1} + \B^*y^k - c\|^2\nonumber\\[3mm]
&+ \frac{\lambda(2-\rho)^2}{\rho}\|x^{k+1} - x^k\|_{\widehat{\Sigma}_{f_1}+\S+\sigma \A\A^*} - (2-\rho)\|\widetilde{x}^k - x^k\|_{\hat{\Sigma}_{f_1}+\S}^2 +(1-\lambda)(2-\rho)\|\widetilde{x}^k - x^k\|_{\widehat{\Sigma}_{f_1}+\S}^2\nonumber\\[3mm]
& + \frac{\sigma(1-\lambda)(2-\rho)}{2}\|B^*(y^k - y^{k-1})\|^2\nonumber\\[3mm]
= &\|\widetilde{x}^{k+1} - x^{k+1}\|_{(2-\rho)(\widehat{\Sigma}_{f_1}+\S)}^2 - (2-\rho)\|\widetilde{x}^k - x^k\|_{\widehat{\Sigma}_{f_1}+\S}^2 + \sigma(1-\lambda)(2-\rho)\Big(\|\A^*x^{k+1} + \B^*y^k - c\|^2\nonumber\\[3mm]
&- \|\A^*x^{k} + \B^*y^{k-1} - c\|^2\Big) + \|\widetilde{x}^{k+1} - x^{k+1}\|_{\frac{1}{2}\Sigma_{f_1}-\widehat{\Sigma}_{f_1}}^2 + \|\widetilde{y}^{k} - y^{k}\|_{\frac{1}{2}\Sigma_{h_1}-\widehat{\Sigma}_{h_1}+(2-\rho)(\widehat{\Sigma}_{h_1}+\T)}^2\nonumber\\[3mm]
&+ (1-\lambda)(2-\rho)\|\widetilde{x}^k - x^k\|_{\widehat{\Sigma}_{f_1}+\S}^2 + (2\lambda-1)\sigma(2-\rho)\|\A^*x^{k+1} + \B^*y^k - c\|^2\nonumber\\[3mm]
& + \frac{\sigma(1-\lambda)(2-\rho)}{2}\|\B^*(y^k - y^{k-1})\|^2 + \frac{\lambda(2-\rho)^2}{\rho}\|x^{k+1} - x^k\|_{\widehat{\Sigma}_{f_1}+\S+\sigma \A\A^*}.
\end{align}
Using Lemma \ref{lem5}, and substituting (\ref{thetaconstieq}) into (\ref{pivineq}), it can get the inequality (\ref{convneq}) easily.

Noting that $\delta_k \ge 0$ in the case of $\lambda\in(\frac{1}{2},1]$, then, using the conditions (\ref{pdefneq}) and (\ref{sumxik}), we have the following inequality from (\ref{convneq}), that is,
\begin{equation}\label{psikbneq}
\begin{array}{l}
0 \le \Psi_{k+1}(\bar{x},\bar{y},\bar{z}) + (2-\rho)\|\widetilde{x}^{k+1} - x^{k+1}\|_{\widehat{\Sigma}_{f_1} + \S}^2 +  \sigma(1-\lambda)(2-\rho)\|\A^*x^{k+1} + \B^*y^{k} - c\|^2\\[3mm]
\quad \le \Psi_k(\bar{x},\bar{y},\bar{z}) + (2-\rho)\|\widetilde{x}^k - x^{k}\|_{\widehat{\Sigma}_{f_1} + \S}^2 +  \sigma(1-\lambda)(2-\rho)\|\A^*x^{k} + \B^*y^{k-1} - c\|^2 + \xi_k\\[3mm]
\quad \le \Psi_1(\bar{x},\bar{y},\bar{z}) + (2-\rho)\|\widetilde{x}^1 - x^1\|_{\widehat{\Sigma}_{f_1} + \S}^2 +  \sigma(1-\lambda)(2-\rho)\|\A^*x^{1} + \B^*y^{0} - c\|^2 + \sum_{j=1}^k\xi_j\\[3mm]
\quad \le +\infty,
 \end{array}
\end{equation}
which means that the sequences of $\{\Psi_k(\bar{x},\bar{y},\bar{z})\}$,  $\{\|\widetilde{x}^k - x^{k}\|_{\hat{\Sigma}_{f_1} + \S}^2\}$ and $\{\|A^*x^{k+1} + B^*y^{k} - c\|^2\}$ are bounded. According to the definition of $\Psi_k$, we know that the sequences $\{\|z_e^k + \sigma(\rho-1)\A^*x_e^{k}\|^2\}, \{\|\A^*x_e^{k}\|^2\}$,  $\{\|\widetilde{x}_e^{k+1}\|_{\hat{\Sigma}_{f_1}+\S}^2\}$ and $\{\|\widetilde{y}_e^k\|_{\hat{\Sigma}_{h_1}+\T}^2\}$ are also bounded. Moreover, from (\ref{noneq}), we get
\begin{equation}\label{xekxkneq}
\frac{1}{2}\|x_e^{k}\|_{\hat{\Sigma}_{f_1} + \S}^2 \le \|\widetilde{x}^k - x^{k}\|_{\widehat{\Sigma}_{f_1} + \S}^2 + \|\widetilde{x}_e^k\|_{\widehat{\Sigma}_{f_1} + \S}^2.
\end{equation}
Also note that $\{\|A^*x_e^{k}\|^2\}$ is bounded, we have that the sequence of $\{\|x_e^{k}\|_{\hat{\Sigma}_{f_1} + \S+\sigma AA^*}^2\}$ is bounded. Then, the sequence $\{\|x^k\|^2\}$ is bounded because of the condition $\F\succ 0$. Similarly, we can get that the sequence $\{\|z^k\|^2\}$ is also bounded. Now, we prove that the sequence $\{\|y^k\|^2\}$ is bounded, too. For any $k\ge1$, from (\ref{convneq}), we have
\begin{equation}\nonumber
\begin{array}{l}
\sum_{j=1}^k\delta_j \le \Big(\Psi_1(\bar{x},\bar{y},\bar{z}) + (2-\rho)\|\widetilde{x}^1 - x^1\|_{\widehat{\Sigma}_{f_1} + \S}^2 +  \sigma(1-\lambda)(2-\rho)\|\A^*x^{1} + \B^*y^{0} - c\|^2\Big)\\[3mm]
 -\Big(\Psi_{k+1}(\bar{x},\bar{y},\bar{z}) + (2-\rho)\|\widetilde{x}^{k+1} - x^{k+1}\|_{\widehat{\Sigma}_{f_1} + \S}^2 +  \sigma(1-\lambda)(2-\rho)\|\A^*x^{k+1} + \B^*y^{k} - c\|^2\Big) + \sum_{j=1}^k\xi_j\\[3mm]
 \le +\infty.
 \end{array}
\end{equation}
Hence, it gets that $\lim_{k\rightarrow+\infty}\delta_k=0$. By the definition of $\delta_k$, when $k\rightarrow+\infty$, we have
\begin{equation}\label{deltaktend0}
\begin{array}{l}
\|\widetilde{x}^{k+1} - x^{k+1}\|_{\frac{1}{2}\Sigma_{f_1}}^2\rightarrow 0,\quad \|\widetilde{y}^{k} - y^{k}\|_{\frac{1}{2}\Sigma_{h_1}+(2-\rho)(\hat{\Sigma}_{h_1}+\T)}^2\rightarrow 0,\quad\|\widetilde{x}^{k} - x^{k}\|_{\hat{\Sigma}_{f_1}+\S}^2\rightarrow 0,\\[3mm]
\|A^*x^{k+1} + B^*y^{k} - c\|^2\rightarrow 0,\quad \|B^*(y^k-y^{k-1})\|^2\rightarrow 0,\quad \|x^{k+1} - x^k\|_{\hat{\Sigma}_{f_1}+\S+\sigma AA^*}^2\rightarrow 0.\\[3mm]
 \end{array}
\end{equation}
 Then, from $\|\widetilde{y}^{k} - y^{k}\|_{\frac{1}{2}\Sigma_{h_1}+(2-\rho)(\widehat{\Sigma}_{h_1}+\T)}^2\rightarrow 0$ and $\frac{1}{2}\Sigma_{h_1}+(2-\rho)(\widehat{\Sigma}_{h_1}+\T)\succ 0$, we can easily know that $\|\widetilde{y}^{k} - y^{k}\|\rightarrow 0$. So, we know that the sequence $\{\|\widetilde{y}^{k} - y^{k}\|\}$ is bounded, and the sequence $\{\|\widetilde{y}_e^k\|\}$ is bounded too, because of $\hat{\Sigma}_{h_1}+\T\succ 0$. Moreover, from (\ref{noneq}), we have
\begin{equation}\nonumber
\frac{1}{2}\|y_e^{k}\|^2 \le \|\widetilde{y}^k - y^{k}\|^2 + \|\widetilde{y}_e^k\|^2.
\end{equation}
Hence, the sequence $\{\|y^k\|^2\}$ is bounded. Consequently, the sequence $\{(x^k,y^k,z^k)\}$ is bounded, and then there exists at least one subsequence $\{(x^{k_i},y^{k_i},z^{k_i})\}$ such that it converges to a cluster point, say $(x^{*},y^{*},z^{*})$.

In the following, we prove that $(x^{*},y^{*})$ is an optimal solution of the problem (\ref{prob1}) and $z^*$ is the corresponding Lagrange multiplier.
By using the first-order optimality condition of the $x$- and $y$-subproblem in (\ref{mipgadmm}) on the subsequence $\{(x^{k_i},y^{k_i},z^{k_i})\}$, we know that
\begin{equation}\label{fooconki}
\left\{
\begin{array}{l}
0 \in \partial f_2(x^{k_i}) + \F (x^{k_i} - \widetilde{x}^{k_i}) + \nabla f_1(\widetilde{x}^{k_i}) + \sigma \A(\A^*\widetilde{x}^{k_i} + \B^*\widetilde{y}^{k_i} - c) + \A\widetilde{z}^{k_i},\\[3mm]
0 \in \partial h_2(y^{k_i}) + \H (y^{k_i} - \widetilde{y}^{k_i}) + \nabla h_1(\widetilde{y}^{k_i}) + \sigma \B(\A^*x^{k_i} + \B^*\widetilde{y}^{k_i} - c) + Bz^{k_i}.
\end{array}
\right.
\end{equation}
From (\ref{deltaktend0}) and (\ref{fooconki}), we have
\begin{equation}\nonumber
\left\{
\begin{array}{l}
0 \in \partial f_2(x^{*}) + \nabla f_1({x}^{*}) + \A{z}^{*},\\[3mm]
0 \in \partial h_2(y^{*}) + \nabla h_1({y}^{*}) + \B z^{*},\\[3mm]
\A^*x^{*} + \B^*{y}^{*} - c = 0.
\end{array}
\right.
\end{equation}
which implies that $(x^*,y^*)$ is an optimal solution to problem (\ref{prob1}) and $z^*$ is the corresponding Lagrange multiplier.

Finally, we show that $(x^*,y^*,z^*)$ is the unique limit point of the sequence $\{(x^k,y^k,z^k)\}$. Without loss of generality, let $(\bar{x},\bar{y},\bar{z}):=(x^*,y^*,z^*)$, from the definition of $\Psi_k$ and the fact that the subsequence $\{(x^{k_i},y^{k_i},z^{k_i})\}\rightarrow (x^*,y^*,z^*)$ as $i\rightarrow\infty$, we have
$$
\lim_{i\rightarrow+\infty}\Big\{\Psi_{k_i}(\bar{x},\bar{y},\bar{z}) + (2-\rho)\|\widetilde{x}^{k_i} - x^{k_i}\|_{\widehat{\Sigma}_{f_1} + \S}^2 +  \sigma(1-\lambda)(2-\rho)\|\A^*x^{k_i} + \B^*y^{k_i-1} - c\|^2\Big\} = 0.
$$
From $(\ref{psikbneq})$, for any $k>k_i$, we have
\begin{equation}\nonumber
\begin{array}{l}
0 \le \Psi_{k+1}(\bar{x},\bar{y},\bar{z}) + (2-\rho)\|\widetilde{x}^{k+1} - x^{k+1}\|_{\widehat{\Sigma}_{f_1} + \S}^2 +  \sigma(1-\lambda)(2-\rho)\|\A^*x^{k+1} + \B^*y^{k} - c\|^2\\[3mm]
\quad \le \Psi_{k_i}(\bar{x},\bar{y},\bar{z}) + (2-\rho)\|\widetilde{x}^{k_i} - x^{k_i}\|_{\widehat{\Sigma}_{f_1} + \S}^2 +  \sigma(1-\lambda)(2-\rho)\|\A^*x^{k_i} + \B^*y^{k_i-1} - c\|^2 + \sum_{j=k_i}^k\xi_j.\\[3mm]
 \end{array}
\end{equation}
Obviously, combining this inequality with (\ref{deltaktend0}), we get
\begin{equation}\nonumber
\lim_{k\rightarrow+\infty}\Psi_{k}(\bar{x},\bar{y},\bar{z}) = 0,
\end{equation}
which means that
\begin{subequations}
\begin{equation}\label{subzklim}
\lim_{k\rightarrow+\infty}\|z_e^k + \sigma(\rho-1)\A^*x_e^k\|^2 = 0
\end{equation}
\begin{equation}\label{subAkxklim}
\lim_{k\rightarrow+\infty}\|\A^*x_e^k\|^2 = 0
\end{equation}
\begin{equation}\label{subwxklim}
\lim_{k\rightarrow+\infty}\|\widetilde{x}_e^k\|_{\hat{\Sigma}_{f_1} + \S}^2 = 0
\end{equation}
\begin{equation}\label{subwyklim}
\lim_{k\rightarrow+\infty}\|\widetilde{y}_e^k\|_{\hat{\Sigma}_{h_1} + \T}^2 = 0
\end{equation}
\end{subequations}
From (\ref{xekxkneq}), (\ref{deltaktend0}) and (\ref{subwxklim}), we know that $\lim_{k\rightarrow+\infty}\|x_e^k\|_{\hat{\Sigma}_{f_1} + \S}^2 = 0$. Then, from (\ref{subAkxklim}), we get
$$
\lim_{k\rightarrow+\infty}\|x_e^k\|_{\widehat{\Sigma}_{f_1} + \S + \sigma \A\A^*}^2 = 0,
$$
Because of $\widehat{\Sigma}_{f_1} + \S + \sigma \A\A^*\succ 0$, we know that $\lim_{k\rightarrow+\infty}x^k = \bar{x}$. Homoplastically, we get $\lim_{k\rightarrow+\infty}y^k = \bar{y}$.

In a similar way, from (\ref{noneq}), we get
\begin{equation}\nonumber
\frac{1}{2}\|z_e^{k}\|^2 \le \|z_e^k + \sigma(\rho-1)\A^*x_e^k\|^2 + \|\sigma(\rho-1)\A^*x_e^k\|^2,
\end{equation}
then, from (\ref{subzklim}) and $\lim_{k\rightarrow+\infty}x_e^k = 0$, we get $\lim_{k\rightarrow+\infty}z_e^k = 0$, and hence, $\lim_{k\rightarrow+\infty}z^k = \bar{z}$. Then, we know that the sequence $\{(x^k,y^k,z^k)\}$ converges to $(\bar{x},\bar{y},\bar{z})$. The poof is complete.
\end{proof}

\setcounter{equation}{0}
\section{Numerical experiments}\label{section5}
In this section, we evaluate the feasibility and efficiency of the G-ADMM-M algorithm by using some simulated convex composite optimization
problems. All experiments are performed under Microsoft Windows 10 and MATLAB R2020b, and run on a Lenovo Laptop with an Intel Core i7-1165G7 CPU at 2.80 GHz and 8 GB of memory.

In this test, we focus on the following convex composite quadratic optimization problem which has been
considered in \cite{miPADMM} and \cite{MGADMM}, that is
\begin{equation}\label{numexprb1}
\min_{x\in\R^m,y\in\R^n} \ \Big\{\frac{1}{2}\langle y, \Q y \rangle - \langle b, y\rangle + \frac{\chi}{2}\|\Pi_{\R_{+}^m}(\D (d-\H y))\|^2 + \mu\|y\|_1 + \delta_{\R_{+}^m}(x) \ : \ \H y + x = c \Big\}
\end{equation}
where $\Q$ is an $n\times n$ symmetric and positive semidefinite matrix (may not be positive definite); $\chi$ is a nonnegative penalty parameter; $\Pi_{\R_{+}^m}(\cdot)$ denotes the projection onto $\R_{+}^m$;
$\H\in\R^{m \times n}$, $b\in\R^n$, $c\in\R^m$, $\mu > 0$ and $d\le c$ are given data; $\D$ is a positive definite diagonal matrix chosen to normalize each row of $\H$ to have an unit norm.
Observing that problem (\ref{numexprb1}) can be expressed in the from of (\ref{prob1}) if it takes
\begin{equation}\nonumber
\left\{
\begin{array}{l}
h_1(y) = \frac{1}{2}\langle y, \Q y \rangle - \langle b, y\rangle + \frac{\chi}{2}\|\Pi_{\R_{+}^m}(\D (d-\H y))\|^2, \ h_2(y) = \mu\|y\|_1,\\[3mm]
f_1(x) \equiv 0, \ f_2(x) = \delta_{\R_{+}^m}(x), \ \A^* = I, \ \B^* = \H.
\end{array}
\right.
\end{equation}
It is not hard to deduce that the KKT system for problem (\ref{numexprb1}) is the following form:
\begin{equation}\label{numexpkkt}
\nabla h_1(y) + \H^* z + v = 0, \  v\in\partial\mu\|y\|_1, \ \H y+x-c=0, \ x \ge 0, \ z \ge 0, \ x \circ z=0.
\end{equation}
where $z$ and $v$ are the corresponding dual variables, $'\circ'$ denotes the elementwise product.

In the following, for performance comparisons, we also employ the M-GADMM of Qin et al. \cite{MGADMM} with $\rho = 1.9$ and M-ADMM of Li et al. \cite{miPADMM} with the step-length parameters $\tau = 1.618$ to solve the problem (\ref{numexprb1}), respectively.
In our algorithm G-ADMM-M,  we set the parameter $\mu=5\sqrt{n}$ and $d = c-5e$ where $e$ is a vector of all ones. Besides, we choose the zero vector as an initial point, and the penalty parameter is initialized as $\sigma = 0.8$. Based on the KKT system (\ref{numexpkkt}), we stop the iterative process based on the residual of the KKT system, that is,
\begin{equation}\nonumber
Res:= \max\Big\{\frac{\|\H y^{k+1} + x^{k+1} -c\|}{1+\|c\|}, \frac{\|\nabla h_1(y^{k+1}) + \H^*z^{k+1} + v^{k+1}\|}{1+\|b\|}\Big\}\le 10^{-5}.
\end{equation}
In the numerical experiments, we choose the same proximal terms and operators as in \cite{miPADMM} and \cite{MGADMM}, i.e.
\begin{equation}\nonumber
\begin{array}{l}
\widehat{\Sigma}_{h_1} = \Q + \chi \H^*\D^2\H, \ \Sigma_{h_1}=\Q, \  \widehat{\Sigma}_{f_1} = \Sigma_{f_1}= 0,\\[3mm]
\T = \lambda_{\max}(\widehat{\Sigma}_{h_1} + \sigma \H^*\H )\I - (\widehat{\Sigma}_{h_1} + \sigma \H^*\H), \ \S = 0.
\end{array}
\end{equation}
With these choices, it is easy to verify that the conditions (\ref{pdefneq}) in Theorem (\ref{convmipgadmm}) are satisfied. Using these notations, we choose different pairs of $(m, n)$ to generate the data used in (\ref{numexprb1}) and the numerical results derived by each algorithm are listed in Tables (\ref{tab1} - \ref{tab2}), which includes the number of iterations (Iter), the
computing time (Time), the KKT residuals of the solution (Res). From these tables, we find that the G-ADMM-M is beter than M-GADMM and M-ADMM in terms of iteration number and computation time almost
when a similar accuracy solutions are achieved. In particular, we can see that our proposed algorithm is $15-35\%$ faster than the M-ADMM.
\begin{table}[ht]
\centering
\renewcommand\tabcolsep{4.5pt}
 {\scriptsize
 \caption{\small Numerical results of M-ADMM, M-GADMM and G-ADMM-M with $\chi=0$.}\vspace{-.1cm}
\begin{tabular}{lc|ccc|ccc|ccc}
\hline
\multirow{2}{*}{m}&
\multirow{2}{*}{n}&
 \multicolumn{3}{c|}{M-ADMM} & \multicolumn{3}{c|}{M-GADMM} & \multicolumn{3}{c}{G-ADMM-M} \\
 \cline{3-11}
    & & Iter & Time & Res & Iter & Time & Res & Iter & Time & Res \\
\hline
200&	500&	5966&	1.44&	9.98e-06&	5032&	1.10&	9.98e-06&	4886&	1.16&	9.98e-06 	\\
500&	200&	576&	0.18&	1.00e-05&	581&	0.17&	9.99e-06&	423&	0.14&	9.96e-06 	\\
500&	500&	794&	1.83&	9.96e-06&	747&	1.59&	9.96e-06&	628&	1.33&	1.00e-05 	\\
1000&	500&	628&	2.80&	9.94e-06&	579&	2.50&	9.98e-06&	444&	1.88&	9.99e-06 	\\
500&	1000&	1354&	4.98&	9.97e-06&	1179&	4.27&	9.90e-06&	1121&	4.12&	9.97e-06 	\\
1000&	1000&	758&	6.39&	9.89e-06&	696&	5.72&	9.98e-06&	558&	4.64&	9.92e-06 	\\
2000&	1000&	630&	11.11&	9.90e-06&	608&	10.61&	9.93e-06&	439&	7.73&	9.96e-06 	\\
1000&	2000&	3088&	46.15&	1.00e-05&	2573&	38.23&	1.00e-05&	2667&	40.05&	1.00e-05 	\\
2000&	2000&	778&	24.39&	9.88e-06&	720&	22.18&	9.96e-06&	550&	17.31&	1.00e-05 	\\
4000&	2000&	587&	41.89&	9.95e-06&	575&	39.78&	9.97e-06&	397&	28.32&	9.88e-06 	\\
2000&	4000&	1816&	105.28&	1.00e-05&	1553&	89.32&	1.00e-05&	1531&	90.12&	9.99e-06 	\\
4000&	4000&	708&	89.75&	9.97e-06&	683&	86.87&	9.99e-06&	524&	68.35&	9.99e-06	\\
4000&	8000&	1334&	320.39&	9.99e-06&	1103&	262.73&	9.99e-06&	1034&	254.14&	9.99e-06 	\\
8000&	4000&	681&	194.44&	9.95e-06&	708&	197.55&	9.99e-06&	487&	141.90&	9.95e-06 	\\
8000&	8000&	793&	460.87&	9.98e-06&	779&	415.07&	9.97e-06&	547&	306.90&	9.99e-06 	\\
\hline
\end{tabular}\label{tab1}
}
\end{table}

\begin{table}[ht]
\centering
\renewcommand\tabcolsep{4.5pt}
 {\scriptsize
 \caption{\small Numerical results of M-ADMM, M-GADMM and G-ADMM-M with $\chi=2\mu$.}\vspace{-.1cm}
\begin{tabular}{lc|ccc|ccc|ccc}
\hline
\multirow{2}{*}{m}&
\multirow{2}{*}{n}&
 \multicolumn{3}{c|}{M-ADMM} & \multicolumn{3}{c|}{M-GADMM} & \multicolumn{3}{c}{G-ADMM-M} \\
 \cline{3-11}
    & & Iter & Time & Res & Iter & Time & Res & Iter & Time & Res   \\
\hline
200&	500&	6221&	1.44&	9.99e-06&	5221&	1.18&	9.99e-06&	5004&	1.28&	1.00e-05 	\\
500&	200&	593&	0.19&	9.92e-06&	568&	0.21&	9.89e-06&	385&	0.13&	9.31e-06 	\\
500&	500&	526&	1.08&	9.87e-06&	495&	0.97&	9.86e-06&	360&	0.88&	9.58e-06 	\\
1000&	500&	458&	2.09&	1.00e-05&	445&	1.95&	9.93e-06&	296&	1.25&	9.96e-06 	\\
500&	1000&	1715&	6.49&	1.00e-05&	1423&	5.21&	9.99e-06&	1519&	5.53&	9.99e-06 	\\
1000&	1000&	430&	3.72&	9.96e-06&	409&	3.46&	9.94e-06&	303&	2.51&	9.65e-06 	\\
2000&	1000&	445&	7.86&	9.86e-06&	441&	7.76&	9.91e-06&	279&	4.95&	9.90e-06 	\\
1000&	2000&	1466&	21.58&	1.00e-05&	1249&	18.57&	9.97e-06&	1296&	19.57&	9.99e-06 	\\
2000&	2000&	341&	10.87&	9.88e-06&	325&	10.39&	9.86e-06&	237&	7.73&	9.79e-06 	\\
4000&	2000&	433&	30.80&	9.97e-06&	430&	30.06&	9.97e-06&	263&	19.15&	9.82e-06 	\\
2000&	4000&	1276&	74.11&	9.99e-06&	1075&	62.47&	9.96e-06&	1104&	65.02&	9.96e-06 	\\
4000&	4000&	312&	42.04&	9.96e-06&	306&	41.44&	9.94e-06&	211&	29.88&	9.99e-06	\\
4000&	8000&	1028&	253.05&	1.00e-05&	841&	203.23&	9.97e-06&	903&	224.51&	9.99e-06 	\\
8000&	4000&	463&	135.72&	9.98e-06&	470&	134.67&	9.92e-06&	298&	90.87&	9.92e-06 	\\
8000&	8000&	315&	188.71&	9.91e-06&	314&	183.85&	9.99e-06&	207&	136.79&	9.85e-06 	\\
\hline
\end{tabular}\label{tab2}
}
\end{table}

\section{Concluding remarks}\label{section6}

We have known that the generalized ADMM of Xiao et al. \cite{spGADMM} has the ability to solve two-block linearly constrained composite convex programming problem, in which the smooth term located in each block must be `quadratic'.
This paper enhanced the capacity of this method and showed that if we used a majorized quadratic function to replace the non-quadratic smooth function, the resulting subproblem in the ADMM framework can be split some smaller problems if a symmetric Gauss-Seidel iteration is used.
The majority of this paper is on the theoretical analysis,  that is, we proved that the sequence generated by the proposed method converges to one of a KKT points of the considered problem. Besides, we also did some simulated experiments to illustrate the effectiveness of the majorization technique and the progressiveness of the proposed method. At last, we must note that, further performance evaluation of the method using various problems deserves investigating. Of course, research on the proposed method's convergence rate and its iteration complexity are also interesting topics for further research.

\section*{Acknowledgments}
We would like to thank Z. Ma from Henan University for her valuable comments on the proof of this paper. The work of Y. Xiao is supported by the National Natural Science Foundation
of China (Grant No. 11971149).

\end{document}